\title{\LARGE \bf
Bandit Online Learning in Merely Coherent Games with Multi-Point Pseudo-Gradient Estimate
}
\newtheorem{theorem}{Theorem}
\newtheorem{assumption}{Assumption}
\newtheorem{lemma}{Lemma}
\newtheorem{envdef}{Definition}
\newcommand{\abs}[1]{\lvert#1\rvert}%
\newcommand{\norm}[1]{\lVert#1\rVert}
\newcommand{\Bnorm}[1]{\Big\lVert#1\Big\rVert}
\newcommand{\cl}[1]{\text{cl}(#1)}
\newcommand{\expt}[2]{\mathbb{E}_{#1}[#2]}
\newcommand{\Bexpt}[2]{\mathbb{E}_{#1}\Big[#2\Big]}
\DeclarePairedDelimiter\ceil{\lceil}{\rceil} 
\DeclareMathOperator{\dom}{dom}
\DeclareMathOperator{\minimize}{minimize}
\newcommand{\playerN}{\mathcal{N}}
\newcommand{\bone}{\boldsymbol 1}
\newcommand{\bzero}{\boldsymbol 0}
\newcommand{\rset}[2]{\mathbb{R}^{#1}_{#2}}
\newcommand{\nset}[2]{\mathbb{N}^{#1}_{#2}}
\newcommand{\vol}[1]{\text{vol}(#1)}
\DeclareMathOperator{\argmin}{argmin}
\DeclareMathOperator{\argmax}{argmax}
\newcommand{\Tblue}[1]{\textcolor{black}{#1}}
\author{Yuanhanqing Huang$^{1}$ and Jianghai Hu$^{1}$
\thanks{This work was supported by the National Science Foundation under Grant No. 2014816 and No.2038410. }
\thanks{$^{1}$The authors are with the Elmore Family School of Electrical and Computer Engineering, Purdue University, West Lafayette, IN, 47907, USA 
        {\tt\small \{huan1282, jianghai\}@purdue.edu}}%
}
\newif\ifproceeding
\newif\ifarxiv
\begin{document}

\maketitle
\thispagestyle{empty}
\pagestyle{empty}

\begin{abstract}
Non-cooperative games serve as a powerful framework for capturing the interactions among self-interested players and have broad applicability in modeling a wide range of practical scenarios, ranging from power management to drug delivery. 
Although most existing solution algorithms assume the availability of first-order information or full knowledge of the objectives and others' action profiles, there are situations where the only accessible information at players' disposal is the realized objective function values. 
In this paper, we devise a bandit online learning algorithm for merely coherent games that integrates the optimistic mirror descent scheme and multi-point pseudo-gradient estimates. 
We further demonstrate that the generated actual sequence of play can converge a.s. to a critical point if the sequences of query radius and sample size are chosen properly, without resorting to extra Tikhonov regularization terms or additional norm conditions. 
Finally, we illustrate the validity of the proposed algorithm via a Rock-Paper-Scissors game and a least square estimation game. 
\end{abstract}

\section{INTRODUCTION}

Recent years have witnessed considerably increasing interest in the analysis of multi-agent systems and large-scale networks, which find a wide range of applications such as thermal load management of autonomous buildings \cite{jiang2021game}, power management in sensor network \cite{campos2008game}, optimal drug delivery in the treatment of disease \cite{wu2012evolutionary}, control of environmental pollution \cite{du2015game}, etc. 
One primary objective in multi-agent systems is to devise local protocols for each agent, by following which, the resulting group behavior is optimal as measured by a certain system-level metric \cite{li2013designing}. 
With its origins in \cite{nash1950equilibrium}, game theory offers the theoretical tools to model and examine the strategic choices and associated outcomes of rational players who make decisions in a non-cooperative manner.  
In particular, in the Nash equilibrium problem (NEP), this group of players seeks to reach a stationary point known as Nash equilibrium (NE), where no rational player has any incentive to unilaterally deviate from it. 

In order to devise an algorithm for the NEP or its variants, it is crucial to have access to the first-order information, i.e., the partial gradient of the local objective function of each player, the evaluation of which nevertheless usually requires the action profile from all players. 
In view of this, in some studies \cite{mertikopoulos2019learning,yi2019operator,tatarenko2020geometric}, the availability of first-order oracles is taken as a given, whereas some other studies \cite{pavel2019distributed, bianchi2022fast, huang2022distributed} investigate network games where a communication network exists and players are willing to communicate with their trusted neighbors and keep local estimates of others' action profiles. 
Despite the notable progress discussed above, there are many real-world scenarios where players only have access to the observed objective values of selected actions, which makes the bandit/zeroth-order learning strategy a compelling choice. 
Our primary objective in this work is to develop an online learning algorithm for multi-player continuous games that possess mere coherence with 
bandit information. 

\textit{Related Work: }
\Tblue{There have been several recent notable contributions to the field of bandit learning in games. 
In their work \cite{bravo2018bandit}, Bravo et al. proposed a bandit version of mirror descent (MD), which guarantees a.s. convergence to an NE when the game is strictly monotone and achieves a convergence rate of $O(1/t^{1/3})$ for strongly monotone cases. 
Concerning the study of convergence rates in the realm of strongly monotone games or strongly variationally stable Nash equilibrium seeking, \cite{lin2021optimal, tatarenko2022rate, tatarenko2023convergence, drusvyatskiy2022improved} have succeeded in elevating the convergence rates from $O(1/t^{1/3})$ to $O(1/t^{1/2})$. 
Huang et al. \cite{huang2023zeroth} developed two bandit learning algorithms by integrating residual pseudo-gradient estimates into single-call extra-gradient schemes that ensure a.s. convergence to critical points of pseudo-monotone plus games. 
Moreover, in strongly pseudo-monotone plus games, by employing the proposed algorithms, the convergence rate is further elevated to  $O(1/t^{1-\epsilon})$. }

To extend the analysis beyond the realm of strictly monotone and pseudo-monotone plus games, Tatarenko et al. \cite{tatarenko2020bandit} utilized the single time-scale Tikhonov regularization and a doubly regularized approximate gradient descent strategy to develop an algorithm that converges to NEs in probability when the game is monotone and four decaying sequences are tuned properly. 
In a recent study \cite{gao2022bandit}, Gao et al. introduced an algorithm that integrates second-order learning dynamics and Tikhonov regularization and established the a.s. convergence of the sequence of play under the assumption that there exists at least one interior variationally stable state (VSS). 
Yet, the convergence is contingent on the norm condition that the $\ell_2$-norm of the state sequence should be greater than that of the VSS, which can be challenging to verify during the iterative process. 

In the literature of variational inequalities (VIs) and their stochastic versions (SVIs), Mertikopoulos et al. \cite{mertikopoulos2018optimistic} showed that the vanilla MD converges when the problem is strictly coherent, a relaxed variant of strict monotonicity, but fails to converge in merely coherent VIs. 
In contrast, the extra-gradient (EG) method is capable of achieving convergence to a solution in all coherent VIs, but it requires the exact operator values. 
In the presence of random noise in operator values, strict coherence is necessary to establish the convergence of the EG iteration. 
Similar convergence analysis is also reported in \cite{kannan2019optimal} for pseudo-monotone plus SVIs. 
To address the challenges posed by random noise, Iusem et al. \cite{iusem2017extragradient} developed an extra-gradient method for pseudo-monotone SVIs that incorporates an iterative variance reduction procedure and established both asymptotic convergence and non-asymptotic convergence rates for the proposed algorithm. 

\textit{Contributions: }
In this work, we develop a bandit online learning algorithm and establish the a.s. convergence of the generated sequence of play under the regularity condition that the game is merely coherent, which is broader and more general than the games investigated in \cite{tatarenko2020geometric, bravo2018bandit, lin2021optimal, huang2023zeroth}.  
The proposed algorithm leverages the optimistic mirror descent (OMD) \cite{azizian2021last, hsieh2019convergence}, a single-call extra-gradient scheme, as the backbone, which enables us to contend with the absence of strict coherence and reduces the query cost induced by the extra step. 
Alongside the OMD updates, the multi-point pseudo-gradient estimation is employed and the decaying rate of the variance of zeroth-order estimations can be controlled by properly tuning the query count per iteration. 
Furthermore, the validity of the proposed algorithm is verified through a Rock-Paper-Scissors game and a least square estimation game. 
\ifproceeding
All the proofs are included in \cite{huang2023bandit} due to the page limit. 
\fi

\textit{Basic Notations: } 
For a set of vectors $\{v_i\}_{i \in S}$, $[v_i]_{i \in S}$ or $[v_1; \cdots; v_{|S|}]$ denotes their vertical stack. 
For a vector $v$ and a positive integer $i$, $[v]_i$ denotes the $i$-th entry of $v$. 
We let $\norm{\cdot}$ denote the $\ell_2$-norm and $\langle, \rangle$ represent the canonical dot product. 
Let $\cl{\mathcal{S}}$ denote the closure of set $\mathcal{S}$, $\text{int}(\mathcal{S})$ the interior, and $\partial \mathcal{S}$ the boundary. 

\section{SETUP AND PRELIMINARIES}

\subsection{Game Formulation}
In a multi-player non-cooperative game $\mathcal{G}$ with the presence of $N$ players, indexed by $\mathcal{N} \coloneqq \{1, \ldots, N\}$, each player $i \in \mathcal{N}$ aims to optimize its own local objective $J^i$ by adjusting its action $x^i \in \mathcal{X}^i \subseteq \rset{n^i}{}$, which can be described as follows:
\begin{align}
\minimize_{x^i \in \mathcal{X}^i} J^i(x^i; x^{-i}),  
\end{align}
where $x^{-i} \coloneqq [x^j]_{j \in \mathcal{N}_{-i}}$ denotes the stack action of other players that parameterizes the objective $J^i$ with $\mathcal{N}_{-i} \coloneqq \mathcal{N}\backslash\{i\}$ and $x \coloneqq [x^j]_{j \in \mathcal{N}}$; 
$\mathcal{X}^i$ denotes the feasible set of player $i$, and for brevity, we let $\mathcal{X} \coloneqq \prod_{j \in \mathcal{N}} \mathcal{X}^j \subseteq \rset{n}{}$ represent the global strategy space and $\mathcal{X}^{-i} \coloneqq \prod_{j \in \mathcal{N}} \mathcal{X}^j \subseteq \rset{n^{-i}}{}$ with $n \coloneqq \sum_{j \in \mathcal{N}} n^j$ and $n^{-i} \coloneqq \sum_{j \in \mathcal{N}^{-i}} n^j$. 
Our blanket assumptions for the objective functions $J^i$'s and the local feasible sets $\mathcal{X}^i$'s will be as follows:

\begin{assumption}\label{asp:objt-set}
For each player $i$, the local objective function $J^i$ is continuously differentiable in $x$ over the global strategy space $\mathcal{X}$.
Moreover, its individual strategy space $\mathcal{X}^i$ is compact and convex, and has a non-empty interior. 
\end{assumption}

Given the smoothness posited in Assumption~\ref{asp:objt-set}, a single-valued operator that we will leverage extensively throughout is the pseudo-gradient operator $F: \rset{n}{} \to \rset{n}{}$. 
It is defined as the concatenation of all the partial gradient operators, i.e., 
\begin{align}
F: x \mapsto [\nabla_{x^i} J^i(x^i; x^{-i})]_{i \in \mathcal{N}}. 
\end{align}



Before proceeding, we remark that Assumption~\ref{asp:objt-set} implicitly implies that $F$ is Lipschitz continuous on $\mathcal{X}$ with some constant $L$, i.e., for any $x$ and $x^\prime \in \mathcal{X}$, we have 
\begin{align}
\norm{F(x) - F(x^\prime)} \leq L\norm{x - x^\prime}. 
\end{align}

As for the solution concept, we focus on critical points (CPs) \cite[Sec.~2.2]{mertikopoulos2022learning}, a more relaxed solution concept than Nash equilibria (NEs), whose definition is given as follows. 
\begin{envdef}\label{def:cps} (Critical Points)
A decision profile $x_* \in \mathcal{X}$ is a critical point of the game $\mathcal{G}$ if it is a solution to the associated (Stampacchia) variational inequality (VI), i.e., 
\begin{align}\label{eq:cps}
\langle F(x_*), x - x_*\rangle \geq 0, \; \forall x \in \mathcal{X}. 
\end{align}
\end{envdef}
We postulate that the games discussed in this work admit at least one critical point inside $\mathcal{X}$. 
A well-known result is that CPs coincide with NEs when $J^i$ is convex and continuously differentiable in $x^i$ for all $i$ \cite[Sec.~1.4.2]{facchinei2003finite}. 

In this work, our aim is to propose a new algorithm that is applicable to a broader class of games as compared to strictly monotone games and pseudo-monotone plus games. 
Moreover, we intend to further relax pseudo-monotonicity assumptions that are usually imposed upon the structure of the game to the ones merely upon equilibria. 
\begin{assumption}\label{asp:vs} (Mere Coherence)
The game $\mathcal{G}$ is merely coherent if every critical point (CP) $x_*$ of $\mathcal{G}$ is merely variationally stable, i.e., $\langle F(x), x - x_*\rangle \geq 0$ for all $x \in \mathcal{X}$. 
\end{assumption}

Before we proceed, it is pertinent to make a few comments. 
Our analysis primarily lies within Euclidean space; however, we recognize the potential for extending its applicability to finite-dimensional Hilbert spaces.
In addition, we employ mere coherence rather than pseudo-monotonicity as the standing assumption, as the former one is less restrictive. 
Recall that an operator $F$ is pseudo-monotone if for all $x, y \in \mathcal{X}$, $\langle F(y), x - y\rangle \geq 0 \implies \langle F(x), x - y\rangle \geq 0$. 
Nonetheless, the latter is generally the more readily verifiable assumption in practical applications, since it does not needs the CPs $x_*$'s to be known a priori.


\subsection{Optimistic Mirror Descent}\label{subsec:omd}

In this subsection, we shall provide a brief overview of the optimistic mirror descent algorithm, as well as related concepts and results. 
As an extension of the Euclidean projection, the mirror map $\nabla \psi^*: \rset{}{} \to \rset{}{}$ is defined as: 
\begin{align}
\nabla \psi^*(z) = \argmax_{x \in \mathcal{X}} \{\langle z, x\rangle - \psi(x)\}, 
\end{align}
where $\psi:\dom\psi \to \rset{}{}$ is a so-called distance-generating function (DGF) with $\dom\psi$ denoting a convex and open set where $\psi$ is well-defined. 
The DGF fulfills the following conditions \cite[Sect.~4.1]{bubeck2014theory}:
$(\romannum{1})$ $\psi$ is differentiable and $\Tilde{\mu}$-strongly convex for some $\Tilde{\mu} > 0$; 
$(\romannum{2})$ $\nabla \psi(\dom \psi) = \rset{n}{}$; 
$(\romannum{3})$ $\cl{\dom \psi} \supseteq \mathcal{X}$ and $\lim_{x \to \partial (\dom \psi)}$ $\norm{\nabla \psi(x)}_* = +\infty $. 
The definition of DGF $\psi$ allows us to introduce a pseudo-distance called the Bregman divergence, which is defined as:
\begin{align}
D(p,x) = \psi(p) - \psi(x) - \langle \nabla \psi(x), p - x\rangle, \forall p, x \in \dom \psi.
\end{align}
To let $D(p, \cdot)$ represent a certain distance measure to $p$ and use this measure to define a neighborhood of $p$, we make the following assumption. 
\begin{assumption}\label{asp:recip}
(Bregman Reciprocity) The chosen DGF $\psi$ satisfies that if the sequence $(x_k)_{k \in \nset{}{+}}$ converges to some point $p$, i.e., $\norm{x_k - p} \to 0$, then $D(p, x_k) \to 0$.
\end{assumption}
Then, the Bregman divergence generates the prox-mapping $P_{x, \mathcal{X}}: \mathcal{H} \to \dom{\psi} \cap \mathcal{X}$ for some fixed $x \in \mathcal{X} \cap \dom{\psi}$ that plays a critical role in mirror descent and its variants:
\begin{align}
P_{x, \mathcal{X}}(y) = \argmin_{x^\prime \in \mathcal{X}}\{\langle y, x - x^\prime\rangle + D(x^\prime, x)\}.
\end{align}

With all these in hand, the optimistic mirror descent (OMD) \cite{azizian2021last, hsieh2019convergence} can be expressed as below: 
\begin{align}\label{eq:omd}
\begin{split}
X_{k+1/2} &= P_{X_k, \mathcal{X}}(-\tau_k F(X_{k-1/2})) \\
X_{k+1} &= P_{X_k, \mathcal{X}}(-\tau_k F(X_{k+1/2})),
\end{split}
\end{align}
\Tblue{where $(\tau_k)_{k \in \nset{}{+}}$ denotes a proper sequence of step sizes. }
The update consists of the following two steps. 
Given the base state $X_k$ at step $k$, in the look-forward step, the leading state $X_{k+1/2}$ is procured by updating $X_k$ with the proxy $F(X_{k-1/2})$ queried at $X_{k-1/2}$ rather than the exact pseudo-gradient $F(X_{k})$ queried at $X_k$ to reduce the oracle call per iteration. 
This step is essential in anticipating the landscape of $F$ and facilitating the convergence when $F$ is merely monotone, i.e., $\langle F(x) - F(y), x - y \rangle \geq 0$, for all $x$ and $y$ feasible. 
In the state-updating step, the base state $X_k$ is revised to $X_{k+1}$ following the pseudo-gradient information $F(X_{k+1/2})$. 
The OMD falls into the single-call category, distinguishing itself from the conventional extra gradient algorithm \cite{iusem2017extragradient} by exclusively utilizing the first-order information at $X_{k+1/2}$, without requiring information from both $X_{k}$ and $X_{k+1/2}$.


\section{MULTI-POINT PSEUDO-GRADIENT ESTIMATION}

In this paper, we examine the scenario where the first-order information at the leading state, i.e., $F(X_{k+1/2})$ is not readily available, and players need to estimate them based on the realized objective function values. 
A prevalent technique in the literature of first-order information estimation methods is the simultaneous perturbation stochastic approximation (SPSA) approach \cite{bravo2018bandit}. 
For each $i \in \mathcal{N}$, let $\mathbb{B}_i, \mathbb{S}_i \subseteq \rset{n^i}{}$ denote the unit ball and the unit sphere centered at the origin. 
At each iteration $k$, before implementing the SPSA estimate, we initially undertake the following perturbation step: 
\begin{align}\label{eq:perb}
\begin{split}
& \hat{X}^i_{k+1/2} = (1 - \frac{\delta_k}{r^i})X^i_{k+1/2}+\frac{\delta_k}{r^i}(p^i + r^iu^i_k) = \bar{X}^i_{k+1/2} + \delta_k u^i_k,
\end{split}
\end{align}
where 
$u^i_k$ is randomly sampled from $\mathbb{S}_{i} \subseteq \rset{n^i}{}$ and we define $u_k \coloneqq [u^i_k]_{i \in \mathcal{N}}$; 
$\delta_k$ represents the random query radius at iteration $k$;
$\mathbb{B}(p^i, r^i) \subseteq \mathcal{X}^i$ is an arbitrary fixed Euclidean ball within the feasible set $\mathcal{X}^i$ that centers at $p^i$ with radius $r^i$; 
$\bar{X}^i_{k+1/2} \coloneqq (1 - \delta_k/r^i)X^i_{k+1/2} + (\delta_k/r^i) p^i$. 
Denote $\bar{X}_{k+1/2} \coloneqq [\bar{X}^i_{k+1/2}]_{i \in \mathcal{N}}$. 
In the merit of the feasibility adjustment in \eqref{eq:perb}, the action to be taken will sit within the feasible set, i.e., $\hat{X}^i_{k+1/2} \in \mathcal{X}^i$ and $\hat{X}_{k+1/2} \coloneqq [\hat{X}^i_{k+1/2}]_{i \in \mathcal{N}} \in \mathcal{X}$. 
With this in hand, the SPSA estimation can be expressed as $\frac{n^i}{\delta_k} J^i(\hat{X}_{k+1/2})u^i_k$. 
Nevertheless, as previously noted in \cite{bravo2018bandit}, the SPSA approach incurs a larger estimation variance with a decrease in query radius aimed at improving estimation accuracy, which results in conservative choices of updating step sizes $\tau_k$ and significant degradation of the convergence rate. 
To resolve this conundrum, there has been increased consideration given to schemes such as two-point estimation and residual estimation to keep the variance bounded. 
On account of this, we consider the multi-point pseudo-gradient estimation (MPG) scheme, the counterparts of which in the field of optimization can be found in \cite{duchi2015optimal}.
At every iteration $k$, each player $i$ executes the perturbation step in \eqref{eq:perb} $(T_k + 1)$ times in an independent manner, takes the action $\hat{X}^i_{k+1/2,t}$, and observes the associated realized objective function values $J^i(\hat{X}_{k+1/2,t})$, where the variable $t \in \nset{}{}$ is an index of the multiple samples taken per iteration.
The multi-point pseudo-gradient estimate can be formulated as below:
\begin{align}\label{eq:mpg}
\tag{MPG}
G^i_k \coloneqq \frac{n^i}{\delta_kT_k}\sum_{t=1}^{T_k}\big(J^i(\hat{X}_{k+1/2,t}) - J^i(\hat{X}_{k+1/2,0})\big)u^i_{k,t}, 
\end{align}
where $(u^i_{k,t})_{t=0, \ldots, T_k}$ are i.i.d. random variables uniformly distributed over $\mathbb{S}_i$; 
the action taken by player $i$ is given by 
$\hat{X}^i_{k+1/2,t} \coloneqq (1 - \frac{\delta_k}{r^i})X^i_{k+1/2}+\frac{\delta_k}{r^i}(p^i + r^iu^i_{k,t}) = \bar{X}^i_{k+1/2} + \delta_k u^i_{k,t}$; $\hat{X}_{k+1/2,t} \coloneqq [\hat{X}^i_{k+1/2,t}]_{i \in \mathcal{N}}$. 
To simplify the presentation, we will henceforth use $\hat{J}^i_{k,t}$ to represent the realized objective value $J^i(\hat{X}_{k+1/2,t})$ for the $t$-th sample at iteration $k$. 
Prior to delving into the properties of \ref{eq:mpg}, we first outline the probability setup to streamline our later discussion. 
Let $(\Omega, \mathcal{F}, \mathcal{P})$ denote the underlying probability space. 
The filtration $(\mathcal{F}_k)_{k \in\nset{}{+}}$ is constructed as $\mathcal{F}_k \coloneqq \sigma\big\{X_0, \{u_{1,t}\}_{t=0}^{T_1}, \ldots, \{u_{k-1,t}\}_{t=0}^{T_{k-1}}\big\}$, which captures the update that results in $X_k$, i.e., the entire information up to and including iteration $k-1$. . 
Then to characterize \ref{eq:mpg}, we start by considering the following decomposition of it:
\begin{align*}
G^i_k =& \nabla_{x^i} J^i(X_{k+1/2}) + \big(G^i_k - \expt{}{G^i_k \mid \mathcal{F}_k}\big) \\
& + \big(\expt{}{G^i_k \mid \mathcal{F}_k} - \nabla_{x^i} J^i(X_{k+1/2})\big). 
\end{align*}
For brevity, we let $B^i_k \coloneqq \expt{}{G^i_k \mid \mathcal{F}_k} - \nabla_{x^i} J^i(X_{k+1/2})$ represent the systematic error and $V^i_k \coloneqq G^i_k - \expt{}{G^i_k \mid \mathcal{F}_k}$ the stochastic error. 
To facilitate later analysis, for each $J^i$, we introduce the $\delta$-smoothed objective function $\Tilde{J}^i_{\delta}$: 
\begin{align}
\Tilde{J}^i_\delta(x^i; x^{-i}) \coloneqq \frac{1}{\mathbb{V}^i_{\delta}}\int_{\delta \mathbb{S}_{-i}}\int_{\delta \mathbb{B}_i} J^i(x^i + \Tilde{\tau}^i; x^{-i} + \tau^{-i})d\Tilde{\tau}^id\tau^{-i}, 
\end{align}
where $\mathbb{S}_{-i} \coloneqq \prod_{j \in \mathcal{N}^{-i}} \mathbb{S}_j \subseteq \rset{n^{-i}}{}$; 
$\mathbb{V}^i_{\delta} \coloneqq \vol{\delta \mathbb{B}_i} \cdot \vol{\delta \mathbb{S}_{-i}}$.  
The lemmas presented below provide an examination of the properties of $B^i_k$ and $V^i_k$, which will be later employed in the proof of the main theorem. 

\begin{lemma}\label{le:bias}
Suppose that Assumption~\ref{asp:objt-set} holds. 
Then at each iteration $k$, the conditional expectation satisfies $\expt{}{G^i_k \mid \mathcal{F}_k} = \nabla_{x^i} \Tilde{J}^i_{\delta_k}(\bar{X}_{k+1/2})$ a.s. for every $i \in \mathcal{N}$. 
Moreover the systematic error $B_k \coloneqq [B^i_k]_{i \in \mathcal{N}}$ possesses a decaying upper bound $\norm{B_k} \leq \alpha_B \delta_k$ for some positive constant $\alpha_B$.
\end{lemma}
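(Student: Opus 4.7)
The plan is to derive the conditional mean of $G^i_k$ in closed form and then control its distance to $\nabla_{x^i} J^i(X_{k+1/2})$ by two separate smoothness estimates.

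First I would exploit the filtration structure. Since $\mathcal{F}_k$ contains everything needed to produce $X_k$ (and hence $X_{k+1/2}$ and $\bar{X}_{k+1/2}$), these iterates are $\mathcal{F}_k$-measurable, while the perturbations $\{u_{k,t}\}_{t=0}^{T_k}$ are i.i.d. and independent of $\mathcal{F}_k$. Split each summand of $G^i_k$ into a ``perturbed'' term $J^i(\hat{X}_{k+1/2,t}) u^i_{k,t}$ and a ``baseline'' term $J^i(\hat{X}_{k+1/2,0}) u^i_{k,t}$. For $t \geq 1$ the baseline term involves $u^i_{k,t}$ which is independent of both $\mathcal{F}_k$ and of $u^i_{k,0}$; since $u^i_{k,t}$ is uniform on $\mathbb{S}_i$ and therefore zero-mean, the tower property gives $\mathbb{E}[J^i(\hat{X}_{k+1/2,0}) u^i_{k,t} \mid \mathcal{F}_k] = 0$.

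Next I would evaluate $\mathbb{E}[J^i(\bar{X}_{k+1/2} + \delta_k u_{k,t}) u^i_{k,t} \mid \mathcal{F}_k]$ by the standard Stokes/divergence-theorem identity (the Flaxman--Kalai--McMahan trick). Conditioning on $\mathcal{F}_k$ treats $\bar{X}_{k+1/2}$ as deterministic, so integrating in the $i$-th block first,
\begin{align*}
\int_{\delta_k \mathbb{B}_i}\!\! \nabla_{x^i} J^i(x^i{+}\tilde\tau^i; x^{-i}{+}\tau^{-i})\,d\tilde\tau^i
= \int_{\delta_k \mathbb{S}_i}\!\! J^i(x^i{+}\tilde\tau^i;x^{-i}{+}\tau^{-i})\,\frac{\tilde\tau^i}{\delta_k}\,d\sigma(\tilde\tau^i),
\end{align*}
and then averaging over $\tau^{-i} \in \delta_k \mathbb{S}_{-i}$, I obtain
\[
\nabla_{x^i} \tilde{J}^i_{\delta_k}(\bar{X}_{k+1/2}) = \frac{n^i}{\delta_k}\,\mathbb{E}\bigl[J^i(\bar{X}_{k+1/2} + \delta_k u_{k,t}) u^i_{k,t} \,\big|\, \mathcal{F}_k\bigr].
\]
Averaging over $t=1,\dots,T_k$ yields the first claim $\mathbb{E}[G^i_k \mid \mathcal{F}_k] = \nabla_{x^i} \tilde{J}^i_{\delta_k}(\bar{X}_{k+1/2})$ a.s.

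For the bias bound I would decompose
\[
B^i_k = \bigl(\nabla_{x^i} \tilde{J}^i_{\delta_k}(\bar{X}_{k+1/2}) - \nabla_{x^i} J^i(\bar{X}_{k+1/2})\bigr) + \bigl(\nabla_{x^i} J^i(\bar{X}_{k+1/2}) - \nabla_{x^i} J^i(X_{k+1/2})\bigr).
\]
Writing $\nabla_{x^i} \tilde{J}^i_{\delta_k}(x)$ as $\mathbb{E}[\nabla_{x^i} J^i(x + \delta_k w)]$ with $w$ ranging over the appropriate product of ball/sphere, the Lipschitz continuity of $F$ bounds the first summand by a constant multiple of $L\delta_k$. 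For the second, the definition $\bar{X}^i_{k+1/2} = X^i_{k+1/2} + (\delta_k/r^i)(p^i - X^i_{k+1/2})$ together with compactness of $\mathcal{X}$ gives $\|\bar{X}_{k+1/2} - X_{k+1/2}\| \leq \delta_k \cdot C$ for $C$ depending only on $\max_i \mathrm{diam}(\mathcal{X}^i)/r^i$, and again Lipschitzness of $F$ yields an $O(\delta_k)$ bound. Summing in $i$ and taking norms produces $\|B_k\| \leq \alpha_B \delta_k$ with an explicit $\alpha_B$.

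The main obstacle, and the only place genuine care is needed, is the divergence-theorem step: the smoothing kernel mixes a ball in the $i$-th block with spheres in all other blocks, so one has to integrate by parts in the $i$-th block while keeping the other marginals intact, and properly track the volume factor $\mathbb{V}^i_{\delta_k} = \vol{\delta_k \mathbb{B}_i}\cdot\vol{\delta_k \mathbb{S}_{-i}}$ against $\delta_k^{n^i - 1}$ arising from the surface element to recover the factor $n^i/\delta_k$. Once that identity is in place, everything else reduces to bookkeeping with the Lipschitz constant $L$ and the diameter of $\mathcal{X}$.
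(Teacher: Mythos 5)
Your proposal is correct and follows essentially the same route as the paper's proof: the conditional mean is identified with $\nabla_{x^i}\Tilde{J}^i_{\delta_k}(\bar{X}_{k+1/2})$ via the per-sample Flaxman--Kalai--McMahan identity (which the paper outsources to Lemma~1 of \cite{huang2023zeroth}, conditioning on the enlarged $\sigma$-algebra containing $u_{k,0}$ rather than killing the baseline term by independence as you do), and the bias bound uses the same two-term decomposition through $\bar{X}_{k+1/2}$ controlled by the Lipschitz constant of $F$ (deferred there to Lemma~2 of \cite{huang2023zeroth}). Your write-up simply fills in the details the paper cites.
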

\begin{proof}
\ifarxiv
See Appendix~\ref{pf:bias}. 
\fi
\ifproceeding
See \cite[Appendix~A]{huang2023bandit}.
\fi
\end{proof}

In contrast to the single-point or two-point estimates, the advantage of utilizing \ref{eq:mpg} is primarily demonstrated in the following lemma, which measures the decaying rate of the stochastic error w.r.t. the number of samples. 
\begin{lemma}\label{le:variance}
Suppose that Assumption~\ref{asp:objt-set} holds. 
Then at each iteration $k$, the squared norm of $V_k \coloneqq [V^i_k]_{i \in \mathcal{N}}$ satisfies $\expt{}{\norm{V_k}^2 \mid \mathcal{F}_k} \leq \alpha_V / T_k$ for some positive constant $\alpha_V$. 
\end{lemma}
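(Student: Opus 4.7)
The plan is to exploit the conditional i.i.d.\ structure of the per-iteration samples so that $\expt{}{\norm{V_k}^2 \mid \mathcal{F}_k}$ reduces to the variance of a single summand scaled by $1/T_k$. Introduce the shorthand $\xi^i_{k,t} \coloneqq (n^i/\delta_k)\bigl(J^i(\hat{X}_{k+1/2,t}) - J^i(\hat{X}_{k+1/2,0})\bigr) u^i_{k,t}$ for $t = 1,\dots,T_k$, so that $G^i_k = (1/T_k)\sum_{t=1}^{T_k} \xi^i_{k,t}$. Since the $\xi^i_{k,t}$ are identically distributed given $\mathcal{F}_k$, they share the common conditional mean $\bar\mu^i_k \coloneqq \expt{}{\xi^i_{k,1} \mid \mathcal{F}_k}$, and hence
\begin{align*}
\expt{}{\norm{V^i_k}^2 \mid \mathcal{F}_k} = \frac{1}{T_k^2}\sum_{s,t=1}^{T_k} \expt{}{\langle \xi^i_{k,s} - \bar\mu^i_k,\; \xi^i_{k,t} - \bar\mu^i_k\rangle \mid \mathcal{F}_k}.
\end{align*}

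The main obstacle is to eliminate the $O(T_k^2)$ off-diagonal covariances, for the summands $\xi^i_{k,s}$ and $\xi^i_{k,t}$ are \emph{not} independent given $\mathcal{F}_k$: they both contain the common baseline $J^i(\hat{X}_{k+1/2,0})$, which is a function of $u_{k,0}$. I intend to handle this by enlarging the conditioning to $(\mathcal{F}_k, u_{k,0})$ and invoking the tower property. Once $(\mathcal{F}_k, u_{k,0})$ is fixed, the summands $\xi^i_{k,s}$ and $\xi^i_{k,t}$ depend only on the disjoint blocks $u_{k,s}$ and $u_{k,t}$ and are therefore conditionally independent, so for any $s \neq t$ in $\{1,\dots,T_k\}$ the off-diagonal term collapses to $\norm{Y^i_k - \bar\mu^i_k}^2$ where $Y^i_k \coloneqq \expt{}{\xi^i_{k,1} \mid \mathcal{F}_k, u_{k,0}}$. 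The crux is then to observe that the baseline contribution to $Y^i_k$, namely $(n^i/\delta_k) J^i(\hat{X}_{k+1/2,0})\,\expt{}{u^i_{k,1} \mid \mathcal{F}_k, u_{k,0}}$, is identically zero because $u^i_{k,1}$ is independent of $u_{k,0}$ and has zero mean on $\mathbb{S}_i$. This renders $Y^i_k$ an $\mathcal{F}_k$-measurable random variable, so $Y^i_k = \bar\mu^i_k$ a.s., and every off-diagonal covariance vanishes.

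With the cross terms gone, $\expt{}{\norm{V^i_k}^2 \mid \mathcal{F}_k} \leq (1/T_k)\expt{}{\norm{\xi^i_{k,1}}^2 \mid \mathcal{F}_k}$, and it remains to bound $\norm{\xi^i_{k,1}}$ deterministically so that the $1/\delta_k$ prefactor does not blow up. Assumption~\ref{asp:objt-set} together with compactness of $\mathcal{X}$ ensures each $J^i$ is Lipschitz on $\mathcal{X}$ with some constant $L_J$, and since $\hat{X}_{k+1/2,1} - \hat{X}_{k+1/2,0} = \delta_k(u_{k,1} - u_{k,0})$ has norm at most $2\sqrt{N}\,\delta_k$, the prefactor cancels and $\norm{\xi^i_{k,1}} \leq 2 n^i L_J \sqrt{N}$ holds pointwise. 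Summing the per-player bounds over $i \in \mathcal{N}$ yields $\expt{}{\norm{V_k}^2 \mid \mathcal{F}_k} \leq \alpha_V / T_k$ with, for instance, $\alpha_V \coloneqq 4 L_J^2 N \sum_{i \in \mathcal{N}} (n^i)^2$.
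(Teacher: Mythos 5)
Your proposal is correct and follows essentially the same route as the paper's proof: the paper likewise enlarges the conditioning to include $u_{k,0}$, shows the cross terms reduce to the squared norm of the ($\mathcal{F}_k$-measurable) conditional mean because the baseline term is multiplied by the zero-mean, independent direction $u^i_{k,t}$, and then bounds the remaining diagonal contribution via the mean value theorem and $\norm{u_{k,t}-u_{k,0}}\leq 2\sqrt{N}$, arriving at the same constant $4N\sum_{i\in\mathcal{N}}(\bar{\nabla}_i n^i)^2$ up to replacing the per-player gradient bounds $\bar{\nabla}_i$ by a single Lipschitz constant $L_J$. The only cosmetic difference is that you expand the covariance sum directly while the paper computes $\expt{}{\norm{G^i_k}^2\mid\Tilde{\mathcal{F}}_k}$ and subtracts $\norm{\nabla_{x^i}\Tilde{J}^i_{\delta_k}(\bar{X}_{k+1/2})}^2$.
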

\begin{proof}
\ifarxiv
See Appendix~\ref{pf:variance}. 
\fi
\ifproceeding
See \cite[Appendix~B]{huang2023bandit}.
\fi
\end{proof}



\section{A VARIANCE-REDUCTION LEARNING ALGORITHM AND CONVERGENCE ANALYSIS}

In view of the convergence properties of OMD introduced in Sec.~\ref{subsec:omd}, we design a zeroth-order algorithm for merely monotone games by incorporating \ref{eq:mpg} into OMD, the precision of which can be controlled by adjusting the sample size per iteration. 
Each player of the group possesses their own local $\Tilde{\mu}^i$-strongly convex DGF, denoted by $\psi^i$. 
Additionally, the function $\psi(x) \coloneqq \sum_{i \in \mathcal{N}} \psi^i(x^i)$ with $x \coloneqq [x^i]_{i \in \mathcal{N}}$ represents the group DGF, which is $\Tilde{\mu}$-strongly convex. 
The proposed approach is outlined in Algorithm~\ref{alg:vr-bdt-omd}. 

\begin{algorithm}
\caption{Zeroth-Order Variance-Reduced Learning of CPs Based on Optimistic Mirror Descent (Player $i$)}\label{alg:vr-bdt-omd}
\begin{algorithmic}[1]
\State \textbf{Initialize:} $X^i_0 = X^i_{1/2} = X^i_1 \in \mathcal{X}^i \cap \dom{\psi^i}$ arbitrarily; 
$G^i_{0} = \bzero_{n^i}$; 
$p^i, r^i$ to be the center and radius of an arbitrary ball within the set $\mathcal{X}^i$
\Procedure{At the $k$-th iteration ($k \in \nset{}{+}$)}{}
\State $X^i_{k+1/2} \leftarrow P_{X^i_k, \mathcal{X}^i}(-\tau G^i_{k-1})$ 
\For{$t = 0, \ldots, T_k$}
    \State Randomly sample the direction $u^i_{k,t}$ from $\mathbb{S}_i$
    \State $\hat{X}^{i}_{k+1/2,t} \leftarrow (1 - \frac{\delta_k}{r^i})X^{i}_{k+1/2,t} + \frac{\delta_k}{r^i}(p^i + r^i u^{i}_{k,t})$ 
    \State Take action $\hat{X}^{i}_{k+1/2,t}$
    \State Observe the realized objective function value $\hat{J}^{i}_{k,t} \coloneqq J^i(\hat{X}^{i}_{k+1/2,t}; \hat{X}^{-i}_{k+1/2,t})$
\EndFor
\State $G^i_k \leftarrow \frac{n^i}{\delta_kT_k}\sum_{t=1}^{T_k}(\hat{J}^{i}_{k,t} - \hat{J}^{i}_{k,0})u^{i}_{k,t} = \frac{1}{T_k}\sum_{t=1}^{T_k}G^{i}_{k,t}$
\State $X^i_{k+1} \leftarrow P_{X^i_k, \mathcal{X}^i}(-\tau G^i_{k})$
\EndProcedure
\State \textbf{Return:} $\{\hat{X}^i_{k+1/2}\}_{i \in \playerN}$
\end{algorithmic}
\end{algorithm}

The Robbins-Siegmund (R-S) theorem serves as a heavy-lifting tool in the field of stochastic optimization to examine the convergence of sequences.
Its formal statement is presented as follows. 

\begin{lemma}(\cite[Thm.~1]{robbins1971convergence})
Let $(\Omega, \mathcal{F}, \mathcal{P})$ be a probability space and $(\mathcal{F}_k)_{k}$ a filtration of $\mathcal{F}$. 
For each $k = 1, 2, \ldots$, $Z_k$, $\beta_k$, $\xi_k$, and $\zeta_k$ are non-negative $\mathcal{F}_k$-measurable random variables that satisfy $\expt{}{Z_{k+1} \mid \mathcal{F}_k} \leq (1 + \beta_k)Z_{k} + \xi_k - \zeta_k$. 
If $\sum_{k \in \nset{}{+}} \beta_k < \infty$ a.s. and $\sum_{k \in \nset{}{+}} \xi_k < \infty$ a.s., then $\lim_{k \to \infty} Z_k$ exists and is finite a.s. and $\sum_{k \in \nset{}{+}} \zeta_k < \infty$ a.s.
\end{lemma}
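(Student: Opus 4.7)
My plan is to reduce the recursion to a non-negative supermartingale and then invoke Doob's convergence theorem. First, since $\sum_k \beta_k < \infty$ a.s., the product $\alpha_k \coloneqq \prod_{j=1}^{k-1}(1+\beta_j)^{-1}$ converges a.s.\ to a strictly positive random limit, so it is a.s.\ bounded between two positive constants. Setting $\tilde Z_k \coloneqq \alpha_k Z_k$, $\tilde \xi_k \coloneqq \alpha_{k+1}\xi_k$, and $\tilde \zeta_k \coloneqq \alpha_{k+1}\zeta_k$, multiplying the given recursion by $\alpha_{k+1}$ yields
\begin{align*}
\expt{}{\tilde Z_{k+1} \mid \mathcal{F}_k} \leq \tilde Z_k + \tilde \xi_k - \tilde \zeta_k,
\end{align*}
while $\sum_k \tilde \xi_k < \infty$ a.s.\ is preserved. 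This eliminates the multiplicative perturbation $\beta_k$ at no cost.

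Next, I would define $W_k \coloneqq \tilde Z_k + \sum_{j<k}\tilde \zeta_j - \sum_{j<k}\tilde \xi_j$. A one-line computation gives $\expt{}{W_{k+1}\mid \mathcal{F}_k} \leq W_k$, so $W_k$ is a supermartingale. The subtlety is integrability, since $W_k$ is only bounded below by $-\sum_{j<k}\tilde\xi_j$, which is a.s.\ finite but need not lie in $L^1$. I would circumvent this by localizing at the stopping times $\tau_N \coloneqq \inf\{k : \sum_{j\leq k}\tilde \xi_j > N\}$ for $N \in \nset{}{+}$. On $\{k\leq \tau_N\}$ the truncation guarantees $W_{k\wedge\tau_N} \geq -N$, so $W_{k\wedge \tau_N} + N$ is a non-negative supermartingale and Doob's theorem yields an a.s.\ finite limit. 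Because $\sum_j \tilde \xi_j < \infty$ a.s.\ forces $\tau_N \uparrow \infty$ a.s.\ as $N \to \infty$, this extends to the convergence of $W_k$ itself.

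Finally, combining the a.s.\ convergence of $W_k$ with $\sum_j \tilde \xi_j < \infty$ a.s.\ shows that $\tilde Z_k + \sum_{j<k}\tilde \zeta_j$ converges a.s.\ to a finite limit. The partial sums $\sum_{j<k}\tilde \zeta_j$ are monotone non-decreasing and, since $\tilde Z_k \geq 0$, bounded above by this limit; hence they converge a.s., and $\tilde Z_k$ converges as the difference. Undoing the change of variables via $Z_k = \tilde Z_k/\alpha_k$ and using that $\alpha_k$ has a strictly positive a.s.\ limit delivers both advertised conclusions. The main obstacle I anticipate is precisely the integrability issue in the supermartingale step: without the stopping-time localization, the natural lower bound $-\sum_j \tilde \xi_j$ is only a.s.\ finite and Doob's theorem cannot be invoked directly on $W_k$; everything else amounts to bookkeeping once the supermartingale structure has been identified.
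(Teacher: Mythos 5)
Your argument is correct, but note that the paper does not actually prove this lemma: it is quoted verbatim from Robbins and Siegmund and used as a black box, so there is no in-paper proof to match. The closest thing the paper does prove is its Theorem~1 (the variant stated just below the lemma), and there the authors take a genuinely different route from yours: they strengthen the hypothesis to summability \emph{in expectation}, $\sum_k \expt{}{\xi_k} < \infty$, which lets them form the integrable, non-negative compensator $\expt{}{\hat{\xi}_{\infty} \mid \mathcal{F}_k} - \hat{\xi}_k$ with $\hat{\xi}_k = \sum_{t \le k}\xi_t$ and add it to $Z_k + \hat{\zeta}_k$ to obtain a bona fide non-negative supermartingale directly, with no localization. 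Your proof instead keeps the weaker, purely almost-sure hypothesis $\sum_k \xi_k < \infty$ a.s.\ of the classical statement, and you correctly identify that the resulting process $W_k$ is then only bounded below by the a.s.-finite but possibly non-integrable $-\sum_{j<k}\tilde{\xi}_j$; your stopping-time truncation $\tau_N$ is exactly the standard device that closes this gap, and the discounting by $\alpha_k = \prod_{j<k}(1+\beta_j)^{-1}$ (which is $\mathcal{F}_{k-1}$-measurable at index $k$, bounded by $1$, and has a strictly positive a.s.\ limit) correctly removes the multiplicative term. Two small points of bookkeeping: the limit of $\alpha_k$ is a positive random variable rather than a deterministic constant, which is all you need but is worth stating precisely; and since the lemma does not assume $\expt{}{Z_1} < \infty$, the supermartingale property of $W_{k \wedge \tau_N}$ should either be read with generalized conditional expectations or be preceded by a further truncation on $\{Z_1 \le M\}$. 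In short, your approach buys the sharper a.s.-summability hypothesis at the price of the localization argument, while the paper's compensator technique (used for its own Theorem~1) is cleaner but requires the $L^1$ hypothesis --- which is precisely why the authors had to state and prove that variant separately instead of citing Robbins--Siegmund.
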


To employ the theorem, it is necessary to guarantee that $\sum_{k \in \nset{}{+}}\xi_k$ is finite a.s. 
Recall from Lemma~\ref{le:variance}, in the variance reduction scenario, the decaying upper bound is constructed for $\expt{}{\norm{V_k}^2 \mid \mathcal{F}_k}$ rather than the random variable $\norm{V_k}^2$. 
In the meantime, unlike the typical extra-gradient method, OMD leverages the pseudo-gradient $F(X_{k-1/2})$ from the last iteration when updating to the leading state $X_{k+1/2}$. 
This approximation brings the stochastic error $\norm{V_{k-1}}^2$ into the recurrent inequality which, due to the absence of the averaging effect, does not possess a decaying upper bound and prevents us from applying the R-S theorem. 
Motivated by the consideration above, our next step will be establishing a variant of the R-S theorem by relaxing the condition imposed upon the sequence $(\xi_k)_{k \in \nset{}{+}}$.  

\begin{theorem}\label{thm:ext-rs}
Let $(\Omega, \mathcal{F}, \mathcal{P})$ be a probability space and $(\mathcal{F}_k)_{k}$ a filtration of $\mathcal{F}$. 
For each $k = 1, 2, \ldots$, $Z_k$, $\xi_k$, and $\zeta_k$ are non-negative $\mathcal{F}_k$-measurable random variables that satisfy $\expt{}{Z_{k+1} \mid \mathcal{F}_k} \leq Z_{k} + \xi_k - \zeta_k$ with $\expt{}{Z_1} < \infty$. 
If $\sum_{k \in \nset{}{+}} \expt{}{\xi_k} < \infty$, then $Z_k$ converges a.s. to some random variable $Z_{\infty}$ with $\expt{}{Z_{\infty}} < \infty$ and $\sum_{k \in \nset{}{+}} \zeta_k < \infty$ a.s.
\end{theorem}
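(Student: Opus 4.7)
The plan is to reduce the statement to the classical Robbins-Siegmund lemma (quoted just above the theorem) combined with a short Fatou argument to upgrade to integrability of the limit. The only novelty compared with the classical version is the relaxation of $\sum_{k} \xi_k < \infty$ a.s.\ to $\sum_{k} \mathbb{E}[\xi_k] < \infty$, and this gap is bridged almost for free by the non-negativity of $\xi_k$.

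First I would invoke Tonelli's theorem: since $\xi_k \geq 0$ for every $k$,
\begin{align*}
\mathbb{E}\Bigl[\sum_{k \in \nset{}{+}} \xi_k\Bigr] = \sum_{k \in \nset{}{+}} \mathbb{E}[\xi_k] < \infty,
\end{align*}
so $\sum_{k} \xi_k < \infty$ almost surely. Taking $\beta_k \equiv 0$ in the classical Robbins-Siegmund theorem, all its hypotheses are now satisfied ($\sum_k \beta_k = 0 < \infty$ trivially, and the recursive inequality is given). Its conclusion yields the existence of an a.s.\ finite random variable $Z_\infty$ with $Z_k \to Z_\infty$ almost surely, together with $\sum_{k} \zeta_k < \infty$ a.s.

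Second, to establish $\mathbb{E}[Z_\infty] < \infty$, I would take unconditional expectations of the hypothesis $\expt{}{Z_{k+1} \mid \mathcal{F}_k} \leq Z_k + \xi_k - \zeta_k$, using $\zeta_k \geq 0$, to obtain $\mathbb{E}[Z_{k+1}] \leq \mathbb{E}[Z_k] + \mathbb{E}[\xi_k]$. Iterating this bound from $k=1$ gives the uniform estimate
\begin{align*}
\sup_{k} \mathbb{E}[Z_k] \leq \mathbb{E}[Z_1] + \sum_{j \in \nset{}{+}} \mathbb{E}[\xi_j] < \infty.
\end{align*}
Fatou's lemma, applied to the a.s.\ convergent sequence $Z_k \to Z_\infty$ of non-negative random variables, then delivers $\mathbb{E}[Z_\infty] \leq \liminf_k \mathbb{E}[Z_k] < \infty$, completing the proof.

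I do not foresee a substantive obstacle here; the argument is essentially two well-known one-liners (Tonelli to pass from summability of expectations to a.s.\ summability, Fatou to control $\mathbb{E}[Z_\infty]$) glued together by the classical R-S theorem. The only point requiring minor care is measurability: one should note that $Z_{k+1}$ need only be integrable (which is guaranteed inductively by the bound on $\mathbb{E}[Z_{k+1}]$) for the conditional expectation in the hypothesis to be well defined, and that no independence assumption is needed anywhere.
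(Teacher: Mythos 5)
Your proposal is correct, but it takes a genuinely different route from the paper. The paper does not pass through the classical Robbins--Siegmund lemma at all: following the technique of Gadat, it absorbs the partial sums $\hat{\zeta}_k = \sum_{t\le k}\zeta_t$ and the compensator $\expt{}{\hat{\xi}_\infty \mid \mathcal{F}_k} - \hat{\xi}_k$ (where $\hat{\xi}_\infty = \sum_t \xi_t$) into an explicit non-negative supermartingale $\Tilde{Z}_k = Z_k + \hat{\zeta}_k + \expt{}{\hat{\xi}_\infty\mid\mathcal{F}_k} - \hat{\xi}_k$, applies the martingale convergence theorem to get an $L^1$ limit, and then shows the compensator tends to zero a.s.\ to peel off the convergence of $Z_k + \hat{\zeta}_k$ and hence of each piece separately. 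Your argument instead observes that non-negativity plus Tonelli already upgrades $\sum_k \expt{}{\xi_k} < \infty$ to $\sum_k \xi_k < \infty$ a.s., so the classical R--S lemma (quoted in the paper with $\beta_k \equiv 0$) applies as a black box, and integrability of $Z_\infty$ follows from iterating the unconditional expectation bound and Fatou. Both arguments are sound and deliver all three conclusions. Yours is shorter and more elementary, essentially showing that the ``relaxation'' in the theorem is vacuous for non-negative $\xi_k$; the paper's construction is self-contained (it reproves convergence from the supermartingale convergence theorem rather than citing R--S) and makes the $L^1$ bound on the limit fall out of the supermartingale structure directly, which is perhaps why the authors present it as a separate theorem. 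The one point you flag --- well-definedness of the conditional expectations --- is not actually an issue, since conditional expectations of non-negative random variables are always defined (possibly infinite), and your inductive bound $\expt{}{Z_{k+1}} \le \expt{}{Z_1} + \sum_{j\le k}\expt{}{\xi_j} < \infty$ settles finiteness in any case.
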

\begin{proof}
\ifarxiv
See Appendix~\ref{pf:ext-rs}. 
\fi
\ifproceeding
See \cite[Appendix~C]{huang2023bandit}.
\fi
\end{proof}

With this conclusion available, we can establish the following results about the convergence of Algorithm~\ref{alg:vr-bdt-omd} and the sufficient conditions to guarantee it. 

\begin{theorem}\label{thm:convg}
Consider a multi-player game $\mathcal{G}$. 
Suppose that Assumptions~\ref{asp:objt-set} to \ref{asp:recip} hold.
In addition, the sequence of query radius $(\delta_k)_{k \in \nset{}{+}}$ and the sequence of the reciprocal of sample size $(1/T_k)_{k \in \nset{}{+}}$ are monotonically decreasing and satisfy
\begin{align}
\sum_{k \in \nset{}{+}} \delta_k < \infty, \;\sum_{k \in \nset{}{+}}1/T_k < \infty.
\end{align}
The step size $\tau$ satisfies $(\tau L /\Tilde{\mu})^2 \leq 1/12$. 
Then the base state $(X_{k})_{k \in \nset{}{+}}$ as well as the leading state $(X_{k+1/2})_{k \in \nset{}{+}}$ converge a.s. to a CP $x_*$ of $\mathcal{G}$. 
Moreover, the actual sequence of play also satisfy $\lim_{k \to \infty} \hat{X}_{k+1/2,t} = x_*$ a.s., for arbitrary sample $t$. 
\end{theorem}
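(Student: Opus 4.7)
The strategy is to convert the OMD update in Algorithm~\ref{alg:vr-bdt-omd} into a Bregman--divergence energy recursion, decompose the zeroth-order estimate $G_k$ via Lemmas~\ref{le:bias}--\ref{le:variance}, and then invoke the extended Robbins--Siegmund result in Theorem~\ref{thm:ext-rs}. Fix any critical point $x_* \in \mathcal{X}$. The first-order optimality of the two prox-mappings, together with $\Tilde{\mu}$-strong convexity of $\psi$, yields the standard OMD three-point inequality
\begin{align*}
D(x_*, X_{k+1}) \leq &\; D(x_*, X_k) + \tau\langle G_k, x_* - X_{k+1/2}\rangle \\
&\; + \tau\langle G_k - G_{k-1}, X_{k+1/2} - X_{k+1}\rangle \\
&\; - \tfrac{\Tilde{\mu}}{2}\norm{X_{k+1} - X_{k+1/2}}^2 - \tfrac{\Tilde{\mu}}{2}\norm{X_{k+1/2} - X_k}^2.
\end{align*}
Decomposing $G_k = F(X_{k+1/2}) + B_k + V_k$, Assumption~\ref{asp:vs} makes $\tau\langle F(X_{k+1/2}), X_{k+1/2} - x_*\rangle \geq 0$, and the bias contribution is summable by Lemma~\ref{le:bias} and $\sum_k \delta_k < \infty$.

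The troublesome ingredients are the current noise $V_k$ and the stale noise $V_{k-1}$ embedded in $G_{k-1}$. Young's inequality upper-bounds $\tau\langle G_k - G_{k-1}, X_{k+1/2} - X_{k+1}\rangle$ by $\tfrac{\Tilde{\mu}}{4}\norm{X_{k+1/2} - X_{k+1}}^2 + \tfrac{\tau^2}{\Tilde{\mu}}\norm{G_k - G_{k-1}}^2$; expanding $\norm{G_k - G_{k-1}}^2$ via the Lipschitz continuity of $F$ produces a $3(\tau L)^2/\Tilde{\mu} \cdot \norm{X_{k+1/2} - X_{k-1/2}}^2$ contribution, alongside $\mathcal{O}(\delta_k^2 + \delta_{k-1}^2)$ bias squares and $\norm{V_k}^2 + \norm{V_{k-1}}^2$ noise squares. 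The prescribed bound $(\tau L/\Tilde{\mu})^2 \leq 1/12$ is exactly what is needed, after splitting $\norm{X_{k+1/2} - X_{k-1/2}}^2 \leq 2\norm{X_{k+1/2} - X_k}^2 + 2\norm{X_k - X_{k-1/2}}^2$ and re-indexing, to absorb the Lipschitz term into the two negative displacement squares, leaving only an $\mathcal{F}_k$-martingale cross term $\tau\langle V_k, x_* - X_{k+1/2}\rangle$ and an aggregated residual.

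Taking conditional expectation kills the martingale term, and Lemma~\ref{le:variance} controls $\expt{}{\norm{V_k}^2 \mid \mathcal{F}_k} \leq \alpha_V/T_k$. The resulting recursion fits the hypotheses of Theorem~\ref{thm:ext-rs}, since the aggregated error is summable \emph{in expectation} under $\sum_k \delta_k < \infty$ and $\sum_k 1/T_k < \infty$. Crucially, because Lemma~\ref{le:variance} only delivers a conditional-expectation bound rather than a pathwise summable bound, the classical Robbins--Siegmund lemma is \emph{not} applicable here, and this gap is exactly what Theorem~\ref{thm:ext-rs} is designed to fill. The theorem then delivers, for every critical point $x_*$, that $D(x_*, X_k)$ converges a.s. to a finite random variable and that $\sum_k \norm{X_{k+1/2} - X_k}^2 < \infty$ a.s., so in particular $\norm{X_{k+1/2} - X_k} \to 0$ a.s.

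The final step is an Opial-type argument. Compactness of $\mathcal{X}$ provides an a.s. subsequence $X_{k_j} \to \hat x$; combined with $\norm{X_{k_j+1/2} - X_{k_j}} \to 0$, continuity of $F$, and continuity of the prox-mapping, the OMD recursion forces $\hat x$ to be a fixed point of $p \mapsto P_{p, \mathcal{X}}(-\tau F(p))$, hence a critical point by the standard variational characterization. Setting $x_* = \hat x$ in the conclusion above and invoking Bregman reciprocity (Assumption~\ref{asp:recip}) along the subsequence gives $D(\hat x, X_{k_j}) \to 0$; since the full sequence $D(\hat x, X_k)$ converges a.s., it must converge to zero, and $\Tilde{\mu}$-strong convexity of $\psi$ yields $X_k \to \hat x$ a.s. The leading state inherits this limit through $\norm{X_{k+1/2} - X_k} \to 0$, and the actually played action satisfies $\norm{\hat X_{k+1/2,t} - X_{k+1/2}} = \mathcal{O}(\delta_k) \to 0$ directly from the perturbation \eqref{eq:perb}. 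The main obstacle throughout is precisely the stale noise $V_{k-1}$ that the look-forward step injects: variance reduction only bounds its conditional second moment, so coupling the tailored smallness condition on $\tau$ with the extended Robbins--Siegmund theorem is what ultimately makes the analysis close under mere coherence.
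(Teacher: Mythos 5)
Your proposal follows the same overall architecture as the paper's proof: the OMD energy inequality for $D(x_*,X_{k+1})$, the decomposition $G_k=F(X_{k+1/2})+B_k+V_k$ with mere coherence killing the pseudo-gradient inner product, Young's inequality plus Lipschitz continuity to control $\norm{G_k-G_{k-1}}^2$, absorption of the $\norm{X_{k+1/2}-X_{k-1/2}}^2$ term under $(\tau L/\Tilde{\mu})^2\leq 1/12$ (the paper does this by carrying an explicit Lyapunov correction $\tfrac{\Tilde{\mu}}{15}\norm{X_{k+1/2}-X_{k-1/2}}^2$ rather than by re-indexing, but these are equivalent in spirit), the extended Robbins--Siegmund theorem applied exactly for the reason you identify, and finally a cluster-point argument closed by Bregman reciprocity. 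The one place where you genuinely diverge is the certification that the cluster point $\hat{x}$ is a critical point. The paper injects the residual $\varepsilon(x)\coloneqq\norm{x-P_{x,\mathcal{X}}(-\tau F(x))}^2$ directly into the recursion via the bound $-\norm{X_{k+1/2}-X_k}^2\leq-\tfrac{1}{2}\varepsilon(X_k)+O((\tau L/\Tilde{\mu})^2)\cdot(\cdots)+O(\norm{B_{k-1}+V_{k-1}}^2)$, so that Theorem~\ref{thm:ext-rs} delivers $\sum_k\varepsilon(X_k)<\infty$ a.s.\ and continuity of $\varepsilon$ finishes the job; this route never requires the noise to vanish pathwise. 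Your route instead passes to the limit in the update $X_{k+1/2}=P_{X_k,\mathcal{X}}(-\tau G_{k-1})$ along the subsequence, which is workable but requires two facts you do not state: (a) $V_k\to 0$ a.s.\ (not merely that its conditional second moment is $\alpha_V/T_k$) --- this does follow from $\sum_k\expt{}{\norm{V_k}^2}\leq\sum_k\alpha_V/T_k<\infty$ and should be made explicit, since $G_{k_j-1}$ contains the stale noise $V_{k_j-1}$; and (b) $X_{k_j-1/2}\to\hat{x}$, which needs $\norm{X_k-X_{k-1/2}}\to 0$ and hence a short bootstrap beyond the bare conclusion $\sum_k\norm{X_{k+1/2}-X_k}^2<\infty$. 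With those additions (and a check of your constants: the split you sketch appears to need $(\tau L/\Tilde{\mu})^2\leq 1/24$ unless the Young's weights are retuned), your variant closes; the paper's residual-function device buys a cleaner argument that avoids both issues.
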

\begin{proof}
\ifarxiv
See Appendix~\ref{pf:convg}. 
\fi
\ifproceeding
See \cite[Appendix~D]{huang2023bandit}.
\fi
\end{proof}


\section{NUMERICAL EXPERIMENTS}
\subsection{The Rock-Paper-Scissors (RPS) Game}
Consider the zero-sum rock-paper-scissors game between two players. 
The payoff matrices $A^a$ and $A^b$ of player $a$ and $b$ are set respectively as
\begin{align*}
A^a \coloneqq \begin{bmatrix} 0 & -1 & 1 \\ 1 & 0 & -1 \\ -1 & 1 & 0 \end{bmatrix}
\;\text{and}\;
A^b \coloneqq \begin{bmatrix} 0 & 1 & -1 \\ -1 & 0 & 1 \\ 1 & -1 & 0 \end{bmatrix} = -A^a, 
\end{align*}
which further give rise to the objective functions: $J^a(x^a; x^b) = -(x^a)^TA^ax^b$ and $J^b(x^b; x^a) = -(x^a)^TA^bx^b$. 
The associated strategy spaces are the probability simplices, i.e., $\mathcal{X}^a = \mathcal{X}^b \coloneqq \{x \in \rset{3}{} \mid 0 \leq x \leq 1, \bone^T x = 1\}$. 
The RPS game is merely monotone and admits a unique CP/NE at $[1/3; 1/3; 1/3]$ for both players. 
To fulfill the requirement about the non-empty interior in Assumption~\ref{asp:objt-set}, taking player $a$ as an example, we can employ a simple coordinate transformation $\varphi: \rset{2}{} \to \rset{3}{}$ with $\varphi: [y_1; y_2] \mapsto [y_1; y_2; 1 - y_1 - y_2] = [x_1; x_2; x_3]$ and $\varphi^{-1}(\mathcal{X}^a) = \Tilde{\mathcal{X}}^a = \{y \in \rset{2}{} \mid 0 \leq y \leq 1, \bone^T y \leq 1\}$. 
Then \ref{eq:mpg} is applied to obtain a pseudo-gradient estimate $\Tilde{G}_k \in \rset{2}{}$, and we use another map $\phi: \rset{2}{} \to \rset{3}{}$ to pull the pseudo-gradient from $y$-coordinate system back to $x$-coordinate system. 
The map $\phi$ is defined as $\phi: [\Tilde{g}_1; \Tilde{g}_2] \mapsto [2/3 \Tilde{g}_1 - 1/3\Tilde{g}_2; -1/3\Tilde{g}_1 + 2/3\Tilde{g}_2; -1/3\Tilde{g}_1-1/3\Tilde{g}_2]$, which is derived from the observation that  
\begin{align*}
    \Tilde{g}_i = \sum_{j=1,2,3}\frac{\partial J^a}{\partial x_j}\cdot \frac{\partial x_j}{\partial y_i} = \sum_{j=1,2,3} g_j\cdot \frac{\partial x_j}{\partial y_i} \;\text{and}\;\sum_{j=1,2,3}g_j = 0. 
\end{align*}
A similar procedure can be applied to player $b$ to guarantee the fulfillment of the assumption. 

In the numerical simulation, we choose $\tau = 0.1$, the decaying query radius $\delta_k = 0.1 \times (k + 20)^{-1.1}$, and the increasing number of queries per iteration $T_k = \ceil{10^{-3} \times k^{1.1} + 20}$. 
Since the negative entropy $h(x) = \sum_{i=1,2,3} [x]_i\log[x]_i$ is $1$-strongly convex in $\norm{\cdot}$ and satisfies all the requirements discussed in Sec.~\ref{subsec:omd}, it can be chosen as a DGF for player $a$ and $b$. 
The simulation results are illustrated in Fig.~\ref{fig:rps}, with Fig.~\ref{fig:rps} (a) and (b) visualizing the actual sequences of play of player $a$ and $b$.
To compare with \cite{gao2022bandit} (MD2-rb), Fig.~\ref{fig:rps} (c) and (d) illustrate the relative distance $\norm{\hat{X}_{k+1/2} - x_*}/\norm{x_*}$ to the CP/NE $x_*$, where the $x$-axis denotes the sample count and iteration index, respectively. 
The selection of parameters for \cite{gao2022bandit} (MD2-rb) adheres to the specifications provided in its corresponding section. 
As depicted in the figure, \cite{gao2022bandit} (MD2-rb) displays a faster decline in the early iterations, whereas Algorithm~\ref{alg:vr-bdt-omd} achieves a superior convergence rate as the progress advances. 
\begin{figure}
    \centering
    \includegraphics[width=0.48\textwidth]{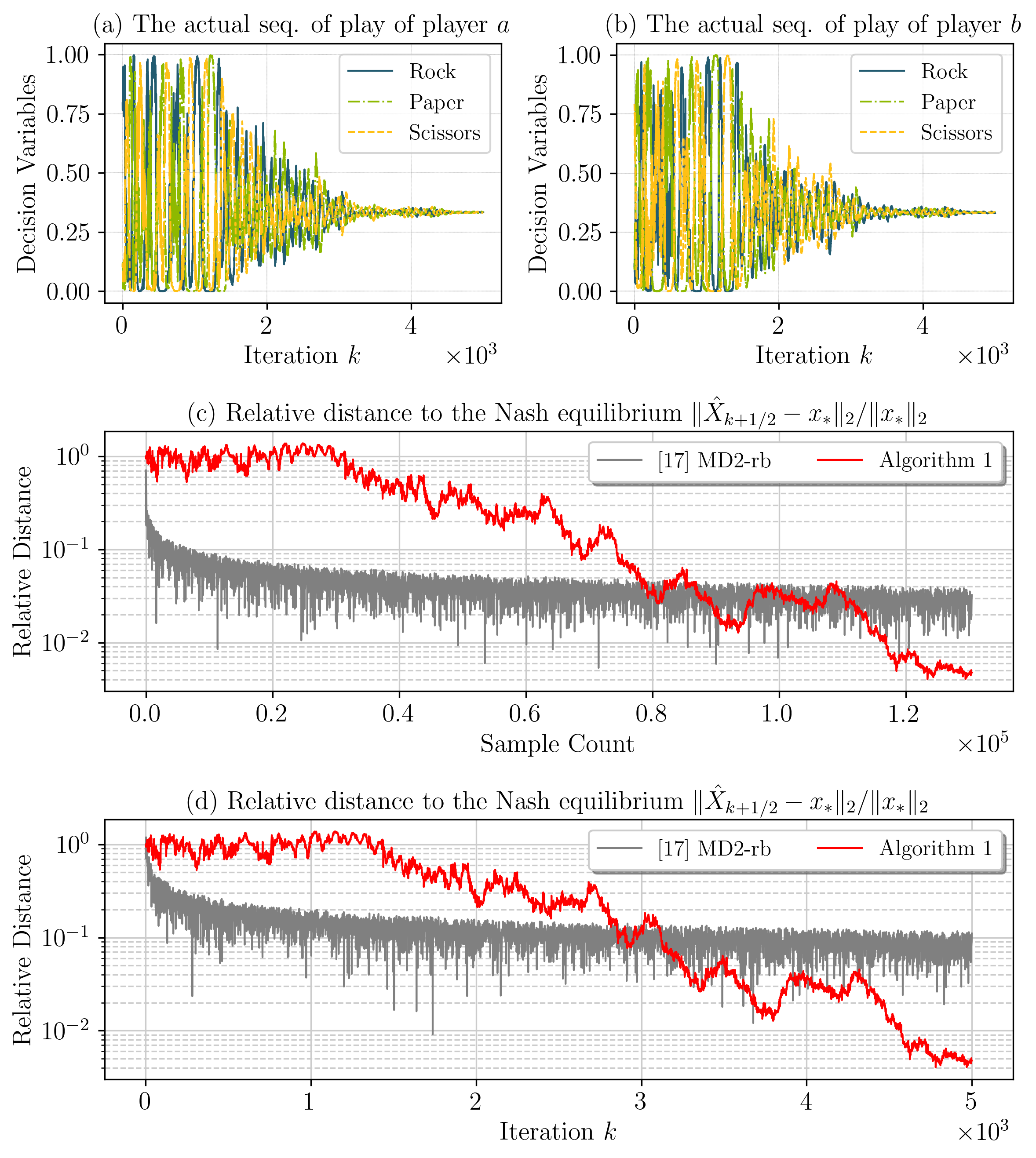}
    \caption{Performance of Algorithm~\ref{alg:vr-bdt-omd} in the RPS Game}
    \label{fig:rps}
\end{figure}

\subsection{Least Square Estimation in Linear Models}
In this numerical experiment, we convert the linear regression to a zero-sum bilinear game between two players \cite[Sec.~VI]{gao2020continuous}. 
Given a set of data samples $\{(z_j, y_j)\}_{j=1}^{M}$ where $z_j \in \rset{N}{}$ and $y_j \in \rset{}{}$ represent the input feature vector and the output scalar, respectively. 
In addition, $y_j = w_0 + w^Tz_j + \xi_j$, with $\Tilde{w} \coloneqq [w_0; w] \in [-\bar{w}, \bar{w}]^{N+1} \subseteq \rset{N+1}{}$ denoting the parameters to be determined and $\xi_j$ some random noise. 
Here, the region $[-\bar{w}, \bar{w}]^{N+1}$ with $\bar{w} \in \rset{+}{}$ is enforced to ensure the strategy space is bounded. 
For brevity, denote $\Tilde{z}_j \coloneqq [1; z_j]$, $\Tilde{Z} \coloneqq [\Tilde{z}_1, \ldots, \Tilde{z}_M]$ and $y = [y_1; \cdots; y_M]$. 
We can then formulate this least square estimation problem as: 
\begin{align}\label{eq:lse-obj}
\underset{\Tilde{w} \in [-\Tilde{w}, \Tilde{w}]^{N+1}}{\minimize} \; \frac{1}{2}\norm{\Tilde{Z}^T\Tilde{w} - y}^2_2. 
\end{align}
To convert it into a two-player game, we leverage an auxiliary variable $\lambda \in \rset{M}{}$ and the fact that $\frac{1}{2}\norm{\Tilde{Z}^T\Tilde{w} - y}^2_2 = \max_{\lambda \in \rset{M}{}}\lambda^T(\Tilde{Z}^T\Tilde{w} - y) - \frac{1}{2}\norm{\lambda}^2_2 = \max_{\lambda \in \rset{M}{}} J(\Tilde{w}, \lambda)$. 
Taking the boundedness of $\Tilde{w}$ into account, it can be asserted that there exists a bounded set $[-\bar{\lambda}, \bar{\lambda}]^M$ such that the solution $\lambda$ to the maximization problem above satisfies $\lambda \in [-\bar{\lambda}, \bar{\lambda}]^M$. 
As such, let $J^1(x^1; x^2) = J(x^1, x^2)$ and $J^2(x^2; x^1) = -J(x^1, x^2)$, and this game can be formulated as follows: 
\begin{align*}
\text{Player 1: }\underset{-\Bar{w} \leq x^1 \leq \Bar{w}}{\minimize}\; J^1(x^1; x^2), \; \text{Player 2: }\underset{-\Bar{\lambda} \leq x^2 \leq \Bar{\lambda}}{\minimize}\; J^2(x^2; x^1). 
\end{align*}
For the verification of the remaining assumptions, showing the uniqueness of the CP, and other detailed discussions, we refer the interested reader to \cite[Sec.~V-B]{huang2023zeroth}\cite[Sec.~VI]{gao2020continuous}. 

When implementing the experiments, we choose $N = 5$, $M = 20$, and $\bar{w} = \bar{\lambda} = 5$. 
Then random noise $\xi_i$ is uniformly distributed over the interval $[-0.6, 0.6]$. 
We compare different sets of the sequences of query radius $\delta_k$ and query samples per iteration $T_k$. 
In Fig.~\ref{fig:lse} (a), the original curve to fit, the noisy data samples used, the optimal solution that can be procured from the existing data, and one OMD solution generated by Algorithm~\ref{alg:vr-bdt-omd} are illustrated. 
Comparing the results with different choices of $\delta_k$, we note that for this problem when $\delta_k$ decays comparable to or faster than $O(k^{-1.1})$, further increasing the decaying rate contributes little to speed up the convergence rate of the sequence. 
As for the influence of different $T_k$, when $T_k$ is a small constant, the generated sequences will diverge; 
when $T_k = 10$ increases to some sufficiently large constant $T_k=15$, the associated sequences demonstrate the trend of convergence to some $\epsilon$-neighborhood of the CP; 
when $T_k$ decays no slower than $O(k^{-1.1})$, as reflected in Fig.~\ref{fig:lse} (c) and (e), the fluctuations of the relative step sizes are mitigated; yet little difference can be observed regarding the relative distance to the CP, as shown in Fig.~\ref{fig:lse} (b) and (d). 

\begin{figure}
    \centering
    \includegraphics[width=0.48\textwidth]{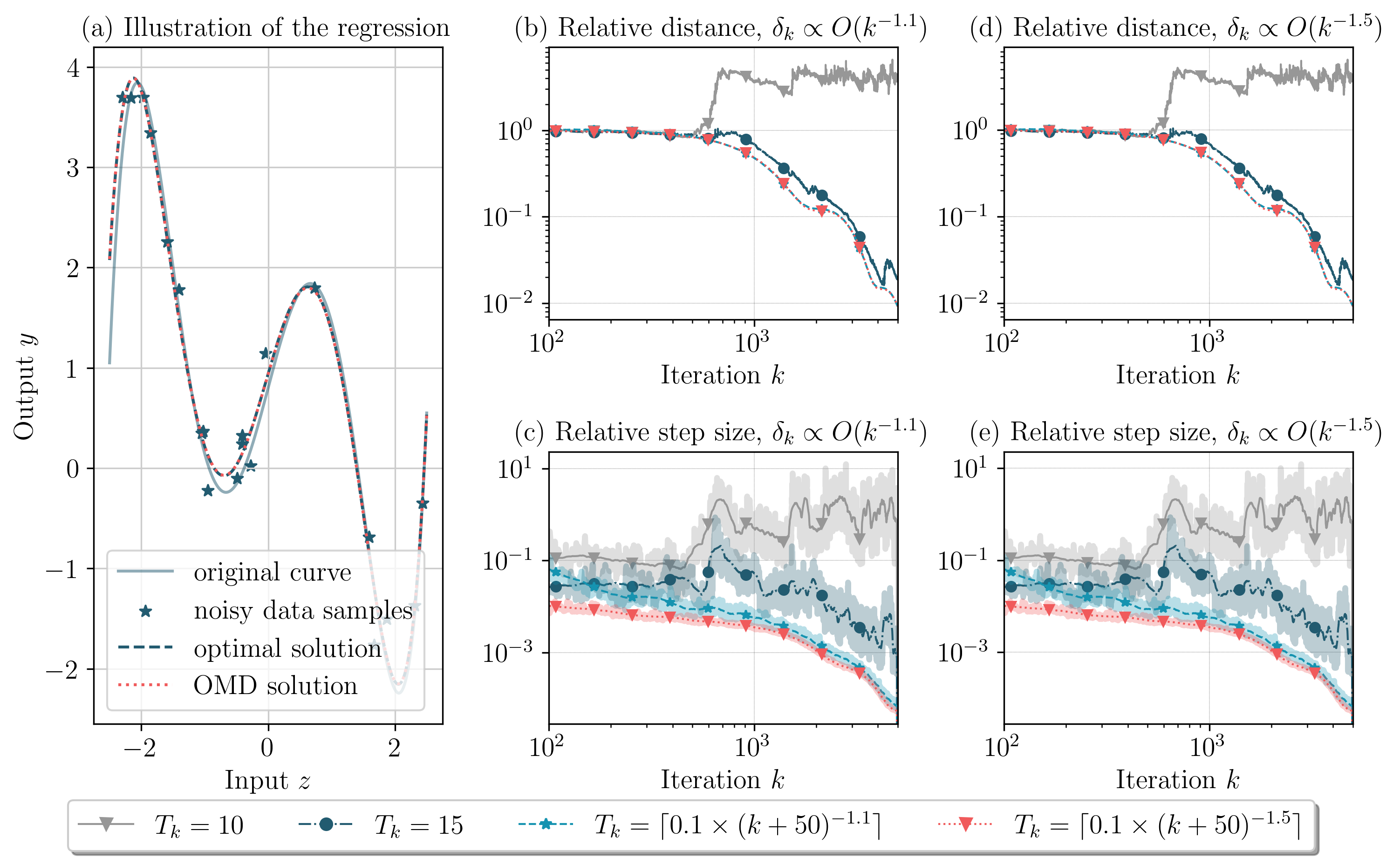}
    \caption{Performance of Algorithm~\ref{alg:vr-bdt-omd} in the Least Square Estimation: \Tblue{in Panel (a), the optimal solution is obtained by solving \eqref{eq:lse-obj} analytically; the OMD solution corresponds to the case when $\delta_k = O(k^{-1.1})$ and $T_k = \ceil{0.1 \times (k+50)^{-1.1}}$}; Panel (b) and (d) visualize the relative distance to the unique CP, i.e., the metric is given by $\norm{\hat{X}_{k+1/2} - x_*}_2/\norm{x_*}_2$; Panel (c) and (e) report the relative updating step sizes per iteration, i.e., $\norm{\hat{X}_{k+1/2} - \hat{X}_{k-1/2}}_2/\norm{\hat{X}_{k-1/2}}_2$. The rolling averages with a window size of 100 and the original fluctuations are illustrated in solid curves and semi-transparent curves, respectively. }
    \label{fig:lse}
\end{figure}

\section{CONCLUSION}
In this work, we investigate bandit learning in multi-player continuous games with an emphasis on handling merely coherent cases.  
A new learning algorithm is proposed, by integrating the idea of optimistic mirror descent and multi-point pseudo-gradient estimation. 
Under the assumptions posited and the conditions that the sequences of query radius $\delta_k$ and the reciprocal of sample size $T_k$ are absolutely summable, the actual sequence of play generated by the proposed algorithm is shown to converge a.s. to a CP of the game. 
There are several potential directions for future exploration. 
The first one is relaxing the requirements for the number of samples per iteration $T_k$, since the superlinear growth of $T_k$ may prevent the application of the proposed algorithm when the bandit feedback is inadequate. 
Furthermore, when it comes to a large-scale player network, the asynchronicity of the updates is a prevalent issue and the cost of synchronization is prohibitive, which is further exacerbated by the multi-point scheme considered. 
We intend to address these questions in future work.

\ifarxiv
\appendices 
\section*{Appendix}
\addcontentsline{toc}{section}{Appendix}
\renewcommand{\thesubsection}{\Alph{subsection}}

\newtheorem{appdxlemma}{Lemma}
\numberwithin{appdxlemma}{subsection} 
\numberwithin{equation}{subsection} 

\subsection{Proof of Lemma~\ref{le:bias}}\label{pf:bias}

By the tower property $\Tilde{\mathcal{F}}_k \coloneqq \sigma\{\mathcal{F}_k \cup \sigma\{u_{k,0}\}\} \supseteq \mathcal{F}_k$ and the linearity of conditional expectation, we have
\begin{align*}
& \expt{}{G^i_k \mid \mathcal{F}_k} = \Bexpt{}{\expt{}{G^i_k \mid \Tilde{\mathcal{F}}_k}\mid \mathcal{F}_k} \\
& = \frac{1}{T_k} \sum_{t=1}^{T_k}\Bexpt{}{\frac{n^i}{\delta_k}\expt{}{\Big(J^i(\hat{X}_{k+1/2,t}) - J^i(\hat{X}_{k+1/2,0})\Big)u^i_{k,t} \mid \Tilde{\mathcal{F}}_k} \mid \mathcal{F}_k}. 
\end{align*}
For every $t \in \{1, \ldots, T_k\}$, it follows from Lemma~1 of \cite{huang2023zeroth} that $\nabla_{x^i} \Tilde{J}^i_{\delta_k}(\bar{X}_{k+1/2})$ is a version of $\frac{n^i}{\delta_k}\expt{}{(J^i(\hat{X}_{k+1/2,t}) - J^i(\hat{X}_{k+1/2,0}))u^i_{k,t} \mid \Tilde{\mathcal{F}}_k}$. 
Based on the fact that $\bar{X}_{k+1/2} \in \mathcal{F}_k$, we have the following relation holds a.s.:
\begin{align*}
& \expt{}{G^i_k \mid \mathcal{F}_k} = \frac{1}{T_k} \sum_{t=1}^{T_k}\expt{}{\nabla_{x^i} \Tilde{J}^i_{\delta_k}(\bar{X}_{k+1/2}) \mid \mathcal{F}_k} = \nabla_{x^i} \Tilde{J}^i_{\delta_k}(\bar{X}_{k+1/2}). 
\end{align*}
With the above results in hand, the norm of systematic error $\norm{B^i_k}$ can be reformulated as $\norm{B^i_k} = \norm{\nabla_{x^i}\tilde{J}^i_{\delta_k}(\bar{X}_{k+1/2}) - \nabla_{x^i}J^i(X_{k+1/2})}$, and the proof for Lemma~2 of \cite{huang2023zeroth} directly carries over.

\subsection{Proof of Lemma~\ref{le:variance}}\label{pf:variance}
Using the definition of \eqref{eq:mpg} and the linearity of conditional expectation, we have:
\begin{align*}
& \expt{}{\norm{G^i_k}^2 \mid \Tilde{\mathcal{F}}_k} = \Big(\frac{n^i}{\delta_k T_k}\Big)^2 \Bexpt{}{\Bnorm{\sum_{t=1}^{T_k} (\hat{J}^i_{k,t} - \hat{J}^i_{k,0})u^i_{k,t}}^2 \mid \Tilde{\mathcal{F}}_k} \\
& = \Big(\frac{n^i}{\delta_k T_k}\Big)^2 \Big(\sum_{t=1}^{T_k}\expt{}{\norm{ (\hat{J}^i_{k,t} - \hat{J}^i_{k,0})u^i_{k,t}}^2 \mid \Tilde{\mathcal{F}}_k} + \sum_{1 \leq s, t \leq T_k, s\neq t} \\
& \expt{}{(\hat{J}^i_{k,s} - \hat{J}^i_{k,0})(\hat{J}^i_{k,t} - \hat{J}^i_{k,0}) \cdot \langle u^i_{k,s}, u^i_{k,t}\rangle \mid \Tilde{\mathcal{F}}_k}\Big). 
\end{align*}
For each pair $(s, t)$ with $s \neq t$, denote $\Tilde{\mathcal{F}}_{k,s} \coloneqq \sigma\{\Tilde{\mathcal{F}}_k\cup \sigma\{u_{k,s}\}\}$ and the conditional expectation of the inner product can be reformulated as follows:
\begin{align*}
& \Big(\frac{n^i}{\delta_k}\Big)^2\expt{}{\langle (\hat{J}^i_{k,s} - \hat{J}^i_{k,0})u^i_{k,s}, (\hat{J}^i_{k,t} - \hat{J}^i_{k,0})u^i_{k,t}\rangle \mid \Tilde{\mathcal{F}}_k} \\
& = \Bexpt{}{\expt{}{\langle \frac{n^i}{\delta_k}(\hat{J}^i_{k,s} - \hat{J}^i_{k,0})u^i_{k,s}, \frac{n^i}{\delta_k}(\hat{J}^i_{k,t} - \hat{J}^i_{k,0})u^i_{k,t}\rangle \mid \Tilde{\mathcal{F}}_{k,s}} \mid \Tilde{\mathcal{F}}_k} \\
& = \Bexpt{}{\langle \frac{n^i}{\delta_k}(\hat{J}^i_{k,s} - \hat{J}^i_{k,0})u^i_{k,s}, \expt{}{\frac{n^i}{\delta_k}(\hat{J}^i_{k,t} - \hat{J}^i_{k,0})u^i_{k,t}\mid \Tilde{\mathcal{F}}_{k,s}}\rangle\mid \Tilde{\mathcal{F}}_k} \\
& = \Bexpt{}{\langle \frac{n^i}{\delta_k}(\hat{J}^i_{k,s} - \hat{J}^i_{k,0})u^i_{k,s}, \nabla_{x^i} \Tilde{J}^i_{\delta_k}(\bar{X}_{k+1/2})\rangle\mid \Tilde{\mathcal{F}}_k} \\
& = \langle \expt{}{\frac{n^i}{\delta_k}(\hat{J}^i_{k,s} - \hat{J}^i_{k,0})u^i_{k,s}\mid \Tilde{\mathcal{F}}_k}, \nabla_{x^i} \Tilde{J}^i_{\delta_k}(\bar{X}_{k+1/2})\rangle \\
& = \langle \nabla_{x^i} \Tilde{J}^i_{\delta_k}(\bar{X}_{k+1/2}), \nabla_{x^i} \Tilde{J}^i_{\delta_k}(\bar{X}_{k+1/2})\rangle = \norm{\nabla_{x^i} \Tilde{J}^i_{\delta_k}(\bar{X}_{k+1/2})}^2 \;\text{a.s.}
\end{align*}
Combining the observations above yields: 
\begin{align*}
& \expt{}{\norm{G^i_k}^2 \mid \Tilde{\mathcal{F}}_k} = \Big(\frac{n^i}{\delta_k T_k}\Big)^2 \sum_{t=1}^{T_k}\expt{}{\norm{ (\hat{J}^i_{k,t} - \hat{J}^i_{k,0})u^i_{k,t}}^2 \mid \Tilde{\mathcal{F}}_k} \\
& \qquad + (1 - \frac{1}{T_k})\norm{\nabla_{x^i} \Tilde{J}^i_{\delta_k}(\bar{X}_{k+1/2})}^2, \; \text{a.s.}
\end{align*}
For the stochastic error $V^i_k \coloneqq G^i_k - \expt{}{G^i_k \mid \mathcal{F}_k} = G^i_k - \nabla_{x^i} \Tilde{J}^i_{\delta_k}(\bar{X}_{k+1/2})$, applying the results for $\expt{}{\norm{G^i_k}^2 \mid \Tilde{\mathcal{F}}_k}$ gives:
\begin{align*}
& \expt{}{\norm{V^i_k}^2 \mid \Tilde{\mathcal{F}}_k} = \expt{}{\norm{G^i_k - \nabla_{x^i} \Tilde{J}^i_{\delta_k}(\bar{X}_{k+1/2})}^2 \mid \Tilde{\mathcal{F}}_k} \\
& = \expt{}{\norm{G^i_k}^2 \mid \Tilde{\mathcal{F}}_k} - 2\expt{}{\langle G^i_k, \nabla_{x^i} \Tilde{J}^i_{\delta_k}(\bar{X}_{k+1/2})\rangle \mid \Tilde{\mathcal{F}}_k} \\
& \qquad + \norm{\nabla_{x^i} \Tilde{J}^i_{\delta_k}(\bar{X}_{k+1/2})}^2 \\
& = \expt{}{\norm{G^i_k}^2 \mid \Tilde{\mathcal{F}}_k} - \norm{\nabla_{x^i} \Tilde{J}^i_{\delta_k}(\bar{X}_{k+1/2})}^2 \\
& = \Big(\frac{n^i}{\delta_k T_k}\Big)^2 \sum_{t=1}^{T_k}\expt{}{\norm{ (\hat{J}^i_{k,t} - \hat{J}^i_{k,0})u^i_{k,t}}^2 \mid \Tilde{\mathcal{F}}_k} \\
& \qquad - \frac{1}{T_k}\norm{\nabla_{x^i} \Tilde{J}^i_{\delta_k}(\bar{X}_{k+1/2})}^2 \\
& \leq \Big(\frac{n^i}{\delta_k}\Big)^2\frac{1}{T_k}\expt{}{(\hat{J}^i_{k,t} - \hat{J}^i_{k,0})^2\norm{ u^i_{k,t}}^2 \mid \Tilde{\mathcal{F}}_k} \;\text{a.s.}
\end{align*}
The difference $(\hat{J}^i_{k,t} - \hat{J}^i_{k,0})^2$ can be further upper bounded as: 
\begin{align}\label{eq:var-eq1}
\begin{split}
& (\hat{J}^i_{k,t} - \hat{J}^i_{k,0})^2 \overset{(a)}{=} (\langle \nabla_x J^i(Z), \hat{X}_{k+1/2,t} - \hat{X}_{k+1/2,0}\rangle)^2 \\
& \leq \norm{\nabla_x J^i(Z)}^2 \cdot \norm{\hat{X}_{k+1/2,t} - \hat{X}_{k+1/2,0}}^2 \\
& \overset{(b)}{\leq} \bar{\nabla}^2_i \cdot \delta_k^2 \norm{u_{k,t} - u_{k,0}}^2 = 4N\bar{\nabla}^2_i\delta_k^2, 
\end{split}
\end{align}
where, in $(a)$, we apply the mean value theorem for differentiable function and let $Z$ denote some convex combination of $\hat{X}_{k+1/2,t}$ and $\hat{X}_{k+1/2,0}$; 
for the relation $(b)$ we let $\bar{\nabla}_i \coloneqq \max_{x \in \mathcal{X}} \norm{\nabla_{x}J^i(x)}_2$ and apply the definition in \eqref{eq:perb}. 
Consequently, it can be directly inferred that: 
\begin{align*}
& \expt{}{\norm{V^i_k}^2 \mid \Tilde{\mathcal{F}}_k} \leq 4N(\bar{\nabla}_in^i)^2/T_k, \\
& \expt{}{\norm{V_k}^2 \mid \Tilde{\mathcal{F}}_k} \leq 4N\sum_{i\in \mathcal{N}}(\bar{\nabla}_in^i)^2/T_k. 
\end{align*}


\subsection{Proof of Theorem~\ref{thm:ext-rs}}\label{pf:ext-rs}
Before proceeding, we attribute the proving technique leveraged below to that of \cite[Thm.~2.3.5]{gadat2017stochastic}, while we provide complete proof for a simplified version and fill out some omitted steps of the reference for the completeness of this work. 
By letting $\hat{\zeta}_k \coloneqq \sum_{t=2}^k \zeta_t$ for $k \geq 2$ and $\hat{\zeta}_1 = 0$, the recurrent inequality can be expressed as
\begin{align}\label{eq:recur-1}
\expt{}{Z_{k+1} + \hat{\zeta}_{k+1} \mid \mathcal{F}_k} \leq  Z_{k} + \hat{\zeta}_{k} + \xi_k, \forall k \in \nset{}{+}. 
\end{align}
Likewise, let $\hat{\xi}_k \coloneqq \sum_{t=2}^k \xi_t$ for $k \geq 2$ and $\hat{\xi}_1 = 0$, and we have $0 \leq \hat{\xi}_k \nearrow \hat{\xi}_\infty$. 
It follows from the monotone convergence theorem that $\expt{}{\hat{\xi}_k} \nearrow \expt{}{\hat{\xi}_{\infty}}$ and $\sum_{k \in \nset{}{+}} \expt{}{\xi_k} < \infty$ implies $\expt{}{\hat{\xi}_{\infty}} < \infty$. 
Through the integration of this definition into \eqref{eq:recur-1}, we can construct a new recurrent inequality as follows: 
\begin{align}
\begin{split}
& \expt{}{Z_{k+1} + \hat{\zeta}_{k+1} + \expt{}{\hat{\xi}_{\infty} \mid \mathcal{F}_{k+1}} - \hat{\xi}_{k+1}\mid \mathcal{F}_k} \\
& \qquad \leq Z_{k} + \hat{\zeta}_{k} + \expt{}{\hat{\xi}_{\infty}\mid \mathcal{F}_k} - \hat{\xi}_{k}.
\end{split}
\end{align}
Based on the observation that $\hat{\xi}_{\infty} - \hat{\xi}_k \geq 0$, we can let $\Tilde{Z}_k \coloneqq Z_{k} + \hat{\zeta}_{k} + \expt{}{\hat{\xi}_{\infty}\mid \mathcal{F}_k} - \hat{\xi}_{k}$, which forms a sequence of non-negative random variables, and deduce that:
\begin{align}
\expt{}{\Tilde{Z}_{k+1} \mid \mathcal{F}_k} \leq \Tilde{Z}_k. 
\end{align}
Furthermore, for each $k \in \nset{}{+}$, $\expt{}{\Tilde{Z}_k} \leq \expt{}{\Tilde{Z}_1} = \expt{}{Z_1} + \expt{}{\hat{\xi}_{\infty}} < \infty$, which together with the preceding observations indicates that $(\Tilde{Z}_k)_{k \in \nset{}{+}}$ is a non-negative super-martingale. 
Straightforward application of the martingale convergence theorem yields: $\lim_{k \to \infty} \Tilde{Z}_k = \Tilde{Z}_\infty$ a.s. where $\Tilde{Z}_\infty$ is a $L^1$ random variable, i.e., $\expt{}{\abs{\Tilde{Z}_{\infty}}} < \infty$. 
Denote $\hat{\xi}^c_k \coloneqq \expt{}{\hat{\xi}_{\infty} \mid \mathcal{F}_k} - \hat{\xi}_k \in \mathcal{F}_k$. 
Note that $(\hat{\xi}^c_k)_{k \in \nset{}{+}}$ is a non-negative super-martingale and $\lim_{k \to \infty} \expt{}{\hat{\xi}^c_k} = \lim_{k \to \infty} \expt{}{\expt{}{\hat{\xi}_{\infty} \mid \mathcal{F}_k} - \hat{\xi}_k} = \lim_{k \to \infty}(\expt{}{\hat{\xi}_{\infty}} - \expt{}{\hat{\xi}_k}) = \expt{}{\hat{\xi}_{\infty}} - \lim_{k \to \infty}\expt{}{\hat{\xi}_k} = 0$ as demonstrated earlier, and thus $\hat{\xi}^c_k \overset{k \to \infty}{\to} 0$ a.s.. 
As a result, $\lim_{k\to\infty}(Z_{k} + \hat{\zeta}_k) = \Tilde{Z}_{\infty}$ a.s. 
Since the sequence $(\hat{\zeta}_k)_{k \in \nset{}{+}}$ is non-negative, monotonically increasing and bounded from above, its limit exists a.s., i.e., $\lim_{k\to\infty} \hat{\zeta}_k = \hat{\zeta}_{\infty}$ a.s. 
Moreover, due to the surrogate relation that $\hat{\zeta}_{\infty} \leq \Tilde{Z}_{\infty}$ and $\expt{}{\Tilde{Z}_{\infty}} < \infty$, we then obtain $\expt{}{\hat{\zeta}_{\infty}} < \infty$. 
Therefore, we arrive at the conclusion that $\sum_{k \in \nset{}{+}} \zeta_k = \lim_{k \to \infty}\hat{\zeta}_k < \infty$ a.s. and $\lim_{k \in\nset{}{+}} Z_k = \Tilde{Z}_{\infty} - \hat{\zeta}_{\infty}$ a.s. and the limit is $L^1$, i.e., $\expt{}{\Tilde{Z}_{\infty} - \hat{\zeta}_{\infty}} < \infty$.

\subsection{Proof of Theorem~\ref{thm:convg}}\label{pf:convg}
By applying the standing recurrent inequality of OMD \cite[Lem~A.2]{huang2023zeroth}\cite[Prop.~B.3]{mertikopoulos2018optimistic} and letting $x_*$ denote one CP of $\mathcal{G}$, we can obtain the following relation for the $k$-th iteration:
\begin{align*}
& D(x_*, X_{k+1}) \leq D(x_*, X_k) - \tau \langle G_k, X_{k+1/2} - x_*\rangle \\
& \qquad + \frac{\tau^2}{2\Tilde{\mu}}\norm{G_k - G_{k-1}}^2 - \frac{\Tilde{\mu}}{2}\norm{X_{k+1/2} - X_{k}}^2 \\
& \leq D(x_*, X_k) - \tau \langle F(X_{k+1/2}), X_{k+1/2} - x_*\rangle - \frac{\Tilde{\mu}}{2}\norm{X_{k+1/2} - X_{k}}^2 \\
& \qquad - \tau \langle B_k, X_{k+1/2} - x_*\rangle - \tau \langle V_k, X_{k+1/2} - x_*\rangle \\
& \qquad + \frac{\tau^2}{2\Tilde{\mu}}\norm{F(X_{k+1/2}) - F(X_{k-1/2}) + B_k - B_{k-1} + V_k - V_{k-1}}^2. 
\end{align*}
Since $x_*$ is a CP of $\mathcal{G}$, we have $\langle F(X_{k+1/2}), X_{k+1/2} - x_*\rangle \geq 0$, as posited in Assumption~\ref{asp:vs}. 
Now take the conditional expectation $\expt{}{\cdot \mid \mathcal{F}_k}$ of both sides of the above inequality. 
For the inner product of systematic error $B_k$, in light of Lemma~\ref{le:bias}, $-\langle B_k, X_{k+1/2} - x_*\rangle \leq \alpha_BD_{\mathcal{X}}\delta_k$, where $D_{\mathcal{X}}$ denotes the diameter of the feasible set, i.e., $D_{\mathcal{X}} \coloneqq \max_{x, y \in \mathcal{X}}\norm{x - y}$. 
Since $\expt{}{V_k \mid \mathcal{F}_k} = 0$ and $X_{k+1/2} \in \mathcal{F}_k$, $\expt{}{\langle V_k, X_{k+1/2} - x_*\rangle \mid \mathcal{F}_k} = \langle \expt{}{V_k\mid \mathcal{F}_k}, X_{k+1/2} - x_*\rangle = 0$. 
By appealing to the Cauchy-Schwarz inequality and the $L$-Lipschitz continuity of $F$, we can derive that 
\begin{align}\label{eq:convg-1}
\begin{split}
& \expt{}{D(x_*, X_{k+1}) \mid \mathcal{F}_k} \leq D(x_*, X_k) - \frac{\Tilde{\mu}}{2}\norm{X_{k+1/2} - X_{k}}^2 \\
&  \qquad + \frac{(\tau L)^2}{\Tilde{\mu}}\norm{X_{k+1/2} - X_{k-1/2}}^2 + \hat{\Delta}_{k,1}, 
\end{split}
\end{align}
where $\hat{\Delta}_{k,1} \coloneqq C_{\Delta, 1}\big(\delta_k + \delta_k^2 + \delta_{k-1}^2 + \expt{}{\norm{V_k}^2 \mid \mathcal{F}_k} + \norm{V_{k-1}}^2\big)$ with some fixed constant $C_{\Delta, 1}$. 

In order to facilitate the convergence analysis in the merely coherent scenario, we are led to upper bound $-\norm{X_{k+1/2} - X_k}^2$ as follows: 
\begin{align*}
& -\norm{X_{k+1/2} - X_k}^2 \leq -\frac{1}{2}\norm{X_k - P_{X_k, \mathcal{X}}(-\tau F(X_k))}^2 \\
& \qquad + \norm{P_{X_k, \mathcal{X}}(-\tau G_{k-1}) - P_{X_k, \mathcal{X}}(-\tau F(X_k))}^2 \\
& \leq -\frac{1}{2}\norm{X_k - P_{X_k, \mathcal{X}}(-\tau F(X_k))}^2 + \frac{\tau^2}{\Tilde{\mu}^2}\norm{G_{k-1} - F(X_k)}^2 \\
& \leq -\frac{1}{2}\norm{X_k - P_{X_k, \mathcal{X}}(-\tau F(X_k))}^2 + 2\big(\frac{\tau L}{\Tilde{\mu}}\big)^2\norm{X_{k-1/2} - X_k}^2 \\
& \qquad + 2\big(\frac{\tau}{\Tilde{\mu}}\big)^2\norm{B_{k-1} + V_{k-1}}^2 \\
& \leq -\frac{1}{2}\norm{X_k - P_{X_k, \mathcal{X}}(-\tau F(X_k))}^2 + 4\big(\frac{\tau L}{\Tilde{\mu}}\big)^2\norm{X_{k-1/2} - X_{k+1/2}}^2 \\
& \qquad + 4\big(\frac{\tau L}{\Tilde{\mu}}\big)^2\norm{X_{k+1/2} - X_{k}}^2 + 2\big(\frac{\tau}{\Tilde{\mu}}\big)^2\norm{B_{k-1} + V_{k-1}}^2, 
\end{align*}
where $\varepsilon(x) \coloneqq \norm{x - P_{x, \mathcal{X}}(-\tau F(x))}^2$ serves as a residual function. 
By the observation that $\varepsilon(x_\star) = 0$ is equivalent to the zero inclusion that $0 \in N_{\mathcal{X}}(x_\star) + \tau F(x_\star)$, we can assert that $x_{\star}$ is a CP of $\mathcal{G} \iff \varepsilon(x_\star) = 0$. 
In light of the upper bound derived above and the choice of step size $(\tau L/\Tilde{\mu})^2 \leq 1/12$, \eqref{eq:convg-1} can be reformulated as: 
\begin{align}\label{eq:convg-2}
\begin{split}
& \expt{}{D(x_*, X_{k+1}) \mid \mathcal{F}_k} \leq D(x_*, X_k) -\frac{\Tilde{\mu}}{2}(1 - \frac{1}{10})\norm{X_{k+1/2} - X_{k}}^2 \\
&  + \frac{1}{10}\big(-\frac{\Tilde{\mu}}{4}\varepsilon(X_k) + \frac{\Tilde{\mu}}{6}\norm{X_{k-1/2} - X_{k+1/2}}^2 + \frac{\Tilde{\mu}}{6}\norm{X_{k+1/2} - X_{k}}^2\big) \\
&   + \frac{\Tilde{\mu}}{12}\norm{X_{k+1/2} - X_{k-1/2}}^2 + \hat{\Delta}_{k,2}, 
\end{split}
\end{align}
where $\hat{\Delta}_{k,2} \coloneqq C_{\Delta, 2}\big(\delta_k + \delta_k^2 + \delta_{k-1}^2 + \expt{}{\norm{V_k}^2 \mid \mathcal{F}_k} + \norm{V_{k-1}}^2\big)$ with some larger fixed constant $C_{\Delta, 2}$.

We reapply the Cauchy-Schwarz inequality to $\norm{X_{k+1/2} - X_{k-1/2}}^2$, yielding 
\begin{align*}
\norm{X_{k+1/2} - X_{k-1/2}}^2 \leq 2\norm{X_{k+1/2} - X_{k}}^2 + 2\norm{X_{k} - X_{k-1/2}}^2,
\end{align*}
while it can be recursively obtained that for all $k \geq 3$, 
\begin{align*}
& \norm{X_{k} - X_{k-1/2}}^2 = \norm{\nabla \psi^*(\nabla \psi(X_{k-1}) - \tau G_{k-1}) \\
& - \nabla \psi^*(\nabla \psi(X_{k-1}) - \tau G_{k-2})}^2 \leq (\tau/\Tilde{\mu})^2\norm{G_{k-1} - G_{k-2}}^2 \\
& \leq 2(\tau L/\Tilde{\mu})^2\norm{X_{k-1/2} - X_{k-3/2}}^2 + 2(\tau/\Tilde{\mu})^2 \Delta_{k,3}, 
\end{align*}
with $ \Delta_{k,3} \coloneqq \norm{B_{k-1} - B_{k-2} + V_{k-1} - V_{k-2}}^2$. 

Adding $(\Tilde{\mu}/10)\norm{X_{k+1/2} - X_{k-1/2}}^2$ to both sides of \eqref{eq:convg-2} and substituting $\norm{X_{k+1/2} - X_{k-1/2}}^2$ of R.H.S. with the proceeding inequality produces:
\begin{align*}
\begin{split}
& \expt{}{D(x_*, X_{k+1}) + \frac{\Tilde{\mu}}{10}\norm{X_{k+1/2} - X_{k-1/2}}^2 \mid \mathcal{F}_k} \leq D(x_*, X_k) \\
&  -\frac{13\Tilde{\mu}}{30}\norm{X_{k+1/2} - X_{k}}^2 - \frac{\Tilde{\mu}}{40}\varepsilon(X_k) + \frac{\Tilde{\mu}}{5}\norm{X_{k+1/2} - X_{k-1/2}}^2 + \hat{\Delta}_{k,2} \\
& \leq D(x_*, X_k) + \frac{\Tilde{\mu}}{15}\norm{X_{k-1/2} - X_{k-3/2}}^2 \\
&  -\frac{\Tilde{\mu}}{30}\norm{X_{k+1/2} - X_{k}}^2 - \frac{\Tilde{\mu}}{40}\varepsilon(X_k) + \hat{\Delta}_{k,3}, \forall k \geq 3, \\
\end{split}
\end{align*}
where $\hat{\Delta}_{k,3} \coloneqq C_{\Delta,3}\big(\delta_k + \sum_{t=k-2}^k (\delta_t^2 + \expt{}{\norm{V_t}^2 \mid \mathcal{F}_k})\big)$ for some further larger constant $C_{\Delta,3}$. 
Further manipulating the coefficients of $\norm{X_{k+1/2} - X_{k-1/2}}^2$ gives $\forall k \geq 3$: 
\begin{align*}
\begin{split}
& \expt{}{D(x_*, X_{k+1}) + \frac{\Tilde{\mu}}{15}\norm{X_{k+1/2} - X_{k-1/2}}^2 \mid \mathcal{F}_k} \\
& \leq D(x_*, X_k) + \frac{\Tilde{\mu}}{15}\norm{X_{k-1/2} - X_{k-3/2}}^2 - \frac{\Tilde{\mu}}{30}\norm{X_{k+1/2} - X_{k-1/2}}^2  \\
& - \frac{\Tilde{\mu}}{30}\norm{X_{k+1/2} - X_{k}}^2 - \frac{\Tilde{\mu}}{40}\varepsilon(X_k) + \hat{\Delta}_{k,3}. 
\end{split}
\end{align*}

Using Lemma~\ref{le:variance}, we have $\expt{}{\hat{\Delta}_{k,3}} \leq C_{\Delta,3}\big(\delta_k + \sum_{t=k-2}^k (\delta_t^2 + \alpha_V/T_t)\big)$, and the summability conditions $\sum_{k \in \nset{}{+}} \delta_k < \infty$ and $\sum_{k \in \nset{}{+}} 1/T_k < \infty$ entail that $\sum_{k \geq 3} \expt{}{\hat{\Delta}_{k,3}} < \infty$. 
Then the application of Theorem~\ref{thm:ext-rs} allows us to assert the following: 
\begin{outline}[enumerate]
\1 $\sum_{k \geq 3} \Tilde{\mu}/40 \cdot \varepsilon(X_k) < \infty$ a.s.;
\1 $\sum_{k \geq 3} \Tilde{\mu}/30 \cdot \norm{X_{k+1/2} - X_{k-1/2}}^2 < \infty$ a.s.; 
\1 $\sum_{k \geq 3} \Tilde{\mu}/30 \cdot \norm{X_{k+1/2} - X_{k}}^2 < \infty$ a.s.; 
\1 $D(x_*, X_{k+1}) + \Tilde{\mu}/15\cdot \norm{X_{k+1/2} - X_{k-1/2}}^2$ converges a.s. to some $L^1$ random variable. 
\end{outline}
These results entail that there exists a sample set $\hat{\Omega} \subseteq \Omega$ and $\mathcal{P}(\hat{\Omega}) = 1$ such that for any $\omega \in \hat{\Omega}$, the above statements $(\romannum{1})-(\romannum{4})$ hold true for the deterministic sequences $(X_k(\omega))_{k \in \nset{}{+}}$ and $(X_{k+1/2}(\omega))_{k \in \nset{}{+}}$. 
Moreover, since $\big(X_k(\omega)\big)_{k \in \nset{}{}} \in \mathcal{X}$ and the map $P_{x, \mathcal{X}}(-\tau F(x))$ is continuous in $x$, there exists a subsequence $(k_m)_{m \in \nset{}{+}}$ such that $X_{k_m}(\omega) \overset{m\to\infty}{\to} x_{\star}$ and $\lim_{m \to \infty} \varepsilon\big(X_{k_m}(\omega)\big) = \varepsilon\big(x_{\star}\big) = 0$, i.e., $x_\star$ is a CP of $\mathcal{G}$. 
We can then substitute $x_\star$ for $x_*$ in $(\romannum{4})$.  Since $(\romannum{2})$ suggests that $\norm{X_{k+1/2} - X_{k-1/2}}^2(\omega) \overset{k \to \infty}{\to} 0$, we can assert that $D(x_\star, X_{k}(\omega))$ admits a finite limit. 
In conjunction with Assumption~\ref{asp:recip}, it follows that $D(x_\star, X_{k_m}(\omega)) \overset{m \to \infty}{\to} 0$ and hence $D(x_\star, X_{k}(\omega)) \overset{k \to \infty}{\to} 0$, i.e., the base states $\big(X_{k}(\omega)\big)_{k \in \nset{}{+}}$ converge to $x_\star$. 
Combining this result with $(\romannum{3})$ yields that the leading states $\big(X_{k+1/2}(\omega)\big)_{k \in \nset{}{+}}$ converge to $x_\star$, and the a.s. convergence of the actual sequence of play $\big(\hat{X}_{k+1/2,t}(\omega)\big)_{k \in \nset{}{+}}$ to $x_\star$ directly derives from \eqref{eq:perb} and $\delta_k \overset{k \to \infty}{\to} 0$. 

\fi

\bibliographystyle{IEEEtran}
\bibliography{IEEEabrv,references}

\begin{thebibliography}{10}
\providecommand{\url}[1]{#1}
\csname url@samestyle\endcsname
\providecommand{\newblock}{\relax}
\providecommand{\bibinfo}[2]{#2}
\providecommand{\BIBentrySTDinterwordspacing}{\spaceskip=0pt\relax}
\providecommand{\BIBentryALTinterwordstretchfactor}{4}
\providecommand{\BIBentryALTinterwordspacing}{\spaceskip=\fontdimen2\font plus
\BIBentryALTinterwordstretchfactor\fontdimen3\font minus
  \fontdimen4\font\relax}
\providecommand{\BIBforeignlanguage}[2]{{%
\expandafter\ifx\csname l@#1\endcsname\relax
\typeout{** WARNING: IEEEtran.bst: No hyphenation pattern has been}%
\typeout{** loaded for the language `#1'. Using the pattern for}%
\typeout{** the default language instead.}%
\else
\language=\csname l@#1\endcsname
\fi
#2}}
\providecommand{\BIBdecl}{\relax}
\BIBdecl

\bibitem{jiang2021game}
Z.~Jiang and J.~Cai, ``Game theoretic control of thermal loads in demand
  response aggregators,'' in \emph{2021 American Control Conference
  (ACC)}.\hskip 1em plus 0.5em minus 0.4em\relax IEEE, 2021, pp. 4141--4147.

\bibitem{campos2008game}
E.~Campos-Nanez, A.~Garcia, and C.~Li, ``A game-theoretic approach to efficient
  power management in sensor networks,'' \emph{Operations Research}, vol.~56,
  no.~3, pp. 552--561, 2008.

\bibitem{wu2012evolutionary}
Y.~Wu, M.~Zhang, J.~Wu, X.~Zhao, and L.~Xia, ``Evolutionary game theoretic
  strategy for optimal drug delivery to influence selection pressure in
  treatment of hiv-1,'' \emph{Journal of mathematical biology}, vol.~64, pp.
  495--512, 2012.

\bibitem{du2015game}
S.~Du, F.~Ma, Z.~Fu, L.~Zhu, and J.~Zhang, ``Game-theoretic analysis for an
  emission-dependent supply chain in a ‘cap-and-trade’system,''
  \emph{Annals of Operations Research}, vol. 228, pp. 135--149, 2015.

\bibitem{li2013designing}
N.~Li and J.~R. Marden, ``Designing games for distributed optimization,''
  \emph{IEEE Journal of Selected Topics in Signal Processing}, vol.~7, no.~2,
  pp. 230--242, 2013.

\bibitem{nash1950equilibrium}
J.~F. Nash~Jr, ``Equilibrium points in n-person games,'' \emph{Proceedings of
  the national academy of sciences}, vol.~36, no.~1, pp. 48--49, 1950.

\bibitem{mertikopoulos2019learning}
P.~Mertikopoulos and Z.~Zhou, ``Learning in games with continuous action sets
  and unknown payoff functions,'' \emph{Mathematical Programming}, vol. 173,
  no.~1, pp. 465--507, 2019.

\bibitem{yi2019operator}
P.~Yi and L.~Pavel, ``An operator splitting approach for distributed
  generalized {Nash} equilibria computation,'' \emph{Automatica}, vol. 102, pp.
  111--121, 2019.

\bibitem{tatarenko2020geometric}
T.~Tatarenko, W.~Shi, and A.~Nedi{\'c}, ``Geometric convergence of gradient
  play algorithms for distributed nash equilibrium seeking,'' \emph{IEEE
  Transactions on Automatic Control}, vol.~66, no.~11, pp. 5342--5353, 2020.

\bibitem{pavel2019distributed}
L.~Pavel, ``Distributed {GNE} seeking under partial-decision information over
  networks via a doubly-augmented operator splitting approach,'' \emph{IEEE
  Transactions on Automatic Control}, vol.~65, no.~4, pp. 1584--1597, 2019.

\bibitem{bianchi2022fast}
M.~Bianchi, G.~Belgioioso, and S.~Grammatico, ``Fast generalized {Nash}
  equilibrium seeking under partial-decision information,'' \emph{Automatica},
  vol. 136, p. 110080, 2022.

\bibitem{huang2022distributed}
Y.~Huang and J.~Hu, ``Distributed computation of stochastic {GNE} with partial
  information: An augmented best-response approach,'' \emph{IEEE Transactions
  on Control of Network Systems}, 2022.

\bibitem{bravo2018bandit}
M.~Bravo, D.~Leslie, and P.~Mertikopoulos, ``Bandit learning in concave
  {N}-person games,'' \emph{Advances in Neural Information Processing Systems},
  vol.~31, 2018.

\bibitem{lin2021optimal}
T.~Lin, Z.~Zhou, W.~Ba, and J.~Zhang, ``Optimal no-regret learning in strongly
  monotone games with bandit feedback,'' \emph{arXiv preprint
  arXiv:2112.02856}, 2021.

\bibitem{tatarenko2022rate}
T.~Tatarenko and M.~Kamgarpour, ``On the rate of convergence of payoff-based
  algorithms to {Nash} equilibrium in strongly monotone games,'' \emph{arXiv
  preprint arXiv:2202.11147}, 2022.

\bibitem{tatarenko2023convergence}
------, ``Convergence rate of learning a strongly variationally stable
  equilibrium,'' \emph{arXiv preprint arXiv:2304.02355}, 2023.

\bibitem{drusvyatskiy2022improved}
D.~Drusvyatskiy, M.~Fazel, and L.~J. Ratliff, ``Improved rates for
  derivative-free gradient play in strongly monotone games,'' in \emph{2022
  IEEE 61st Conference on Decision and Control (CDC)}.\hskip 1em plus 0.5em
  minus 0.4em\relax IEEE, 2022, pp. 3403--3408.

\bibitem{huang2023zeroth}
Y.~Huang and J.~Hu, ``Zeroth-order learning in continuous games via residual
  pseudogradient estimates,'' \emph{arXiv preprint arXiv:2301.02279}, 2023.

\bibitem{tatarenko2020bandit}
T.~Tatarenko and M.~Kamgarpour, ``Bandit online learning of {Nash} equilibria
  in monotone games,'' \emph{arXiv preprint arXiv:2009.04258}, 2020.

\bibitem{gao2022bandit}
B.~Gao and L.~Pavel, ``Bandit learning with regularized second-order mirror
  descent,'' in \emph{2022 IEEE 61st Conference on Decision and Control
  (CDC)}.\hskip 1em plus 0.5em minus 0.4em\relax IEEE, 2022, pp. 5731--5738.

\bibitem{mertikopoulos2018optimistic}
P.~Mertikopoulos, B.~Lecouat, H.~Zenati, C.-S. Foo, V.~Chandrasekhar, and
  G.~Piliouras, ``Optimistic mirror descent in saddle-point problems: Going the
  extra(-gradient) mile,'' in \emph{International Conference on Learning
  Representations}, 2019.

\bibitem{kannan2019optimal}
A.~Kannan and U.~V. Shanbhag, ``Optimal stochastic extragradient schemes for
  pseudomonotone stochastic variational inequality problems and their
  variants,'' \emph{Computational Optimization and Applications}, vol.~74,
  no.~3, pp. 779--820, 2019.

\bibitem{iusem2017extragradient}
A.~N. Iusem, A.~Jofr{\'e}, R.~I. Oliveira, and P.~Thompson, ``Extragradient
  method with variance reduction for stochastic variational inequalities,''
  \emph{SIAM Journal on Optimization}, vol.~27, no.~2, pp. 686--724, 2017.

\bibitem{azizian2021last}
W.~Azizian, F.~Iutzeler, J.~Malick, and P.~Mertikopoulos, ``The last-iterate
  convergence rate of optimistic mirror descent in stochastic variational
  inequalities,'' in \emph{Conference on Learning Theory}.\hskip 1em plus 0.5em
  minus 0.4em\relax PMLR, 2021, pp. 326--358.

\bibitem{hsieh2019convergence}
Y.-G. Hsieh, F.~Iutzeler, J.~Malick, and P.~Mertikopoulos, ``On the convergence
  of single-call stochastic extra-gradient methods,'' \emph{Advances in Neural
  Information Processing Systems}, vol.~32, 2019.

\bibitem{mertikopoulos2022learning}
P.~Mertikopoulos, Y.-P. Hsieh, and V.~Cevher, ``Learning in games from a
  stochastic approximation viewpoint,'' \emph{arXiv preprint arXiv:2206.03922},
  2022.

\bibitem{facchinei2003finite}
F.~Facchinei and J.-S. Pang, \emph{Finite-dimensional variational inequalities
  and complementarity problems}.\hskip 1em plus 0.5em minus 0.4em\relax
  Springer, 2003.

\bibitem{bubeck2014theory}
S.~Bubeck, ``Theory of convex optimization for machine learning,'' \emph{arXiv
  preprint arXiv:1405.4980}, vol.~15, 2014.

\bibitem{duchi2015optimal}
J.~C. Duchi, M.~I. Jordan, M.~J. Wainwright, and A.~Wibisono, ``Optimal rates
  for zero-order convex optimization: The power of two function evaluations,''
  \emph{IEEE Transactions on Information Theory}, vol.~61, no.~5, pp.
  2788--2806, 2015.

\bibitem{robbins1971convergence}
H.~Robbins and D.~Siegmund, ``A convergence theorem for non-negative almost
  supermartingales and some applications,'' in \emph{Optimizing methods in
  statistics}.\hskip 1em plus 0.5em minus 0.4em\relax Elsevier, 1971, pp.
  233--257.

\bibitem{gao2020continuous}
B.~Gao and L.~Pavel, ``Continuous-time discounted mirror descent dynamics in
  monotone concave games,'' \emph{IEEE Transactions on Automatic Control},
  vol.~66, no.~11, pp. 5451--5458, 2020.

\bibitem{gadat2017stochastic}
S.~Gadat, ``Stochastic optimization algorithms,'' \emph{Lecture notes,
  University of Toulouse, Toulouse School of Economics}, 2018.

\end{thebibliography}

\end{document}